\documentclass[a4paper, 10pt]{extarticle}
\usepackage[top=70pt,bottom=70pt,left=75pt,right=77pt]{geometry}

\usepackage{amssymb, amsmath, amsthm, setspace, bbm, prettyref, graphicx, float, subfigure, color, mathrsfs, mathtools, comment}

\usepackage{Lutsko_Style}
\DeclareMathSizes{10}{9}{7}{5}
\usepackage[normalem]{ulem}
\usepackage{soul}

\usepackage{graphicx}

\global\long\def\vep{\varepsilon}

\title{
Poissonian correlations of $\alpha n^d$ mod $1$
}
 \author{
{\sc Chris Lutsko, Nick Rome,
and Niclas Technau
}
}
\date{}

\begin{document}

\maketitle

  \setlength{\abovedisplayskip}{1mm}
  
\begin{abstract}

    Let $x(n):=\alpha n^d \mod 1$ for integer $d >1$ and non-zero real $\alpha$. We show that 
    $\{x(n)\}_{n>0}$ has Poissonian $\ell$-point correlations for almost all choices of $\alpha$ when
    $d$ is large (depending on $\ell$). This falls in line with the expected behavior from the Berry--Tabor conjecture. 
    Further, in the spirit of a conjecture of Rudnick--Sarnak~\cite{RS}, we show Poissonian $\ell$-point correlations for a set of badly approximable $\alpha$ of full Hausdorff dimension by a Fourier analytic transference principle.

    The proof makes use of an application of the determinant method to count points on a diagonal hypersurface of degree $d$ in such a way as to capture the contribution of points belonging to lower dimensional varieties. As $d$ grows, these `special solutions' dominate the count and non-special solutions become increasingly rare. This stratified counting statement allows us to control the number of points on average very effectively.

  \let\thefootnote\relax\footnotetext{{\sc MSC2020:} 11K06, 11L07, 15B48}
  \let\thefootnote\relax\footnotetext{{\sc Key words and phrases:}
 Local Statistics; Sequences Modulo $1$; Determinant method} 
  \end{abstract}

  \section{Introduction}

    \subsection{Poissonian Correlations of monomials mod $1$}
    
    Fix a degree, $d\in \N_{>1}$, and a dilation, $\alpha \in \R_{\neq 0}$, and consider the sequence 
    $\cX_{\alpha}^d:=\{x(n)\}_{n>0}$, where 
    \begin{align*}
        x(n)=x_{\alpha,d}(n):= \alpha n^d \mod 1.
    \end{align*}
    Then, given a test function, $f\in C_c^\infty(\R^{\ell-1})$, we denote the $\ell$-point correlation function of $\cX_{\alpha}^d$ by $R_\ell(\cX_{\alpha}^d,f):=\lim_{N\to \infty} R_{\ell}^N(\cX_\alpha^d,f)$, if it exists, where
    \begin{equation}\label{def ell corr func}
    R_{\ell}^N(\cX_\alpha^d,f):=
    \frac{1}{N} 
    \sum_{\vect{n}\in [N]^\ell}^{\ast}
    \sum_{\vect{k}\in\Z^{\ell-1}}
     f  (N(x(n_1)-x(n_2)+k_1), \dots ,N( x(n_{\ell-1})-x(n_\ell)
     +k_{\ell-1}))      ,
    \end{equation}
     where $\ast$ indicates that the coordinates of $\vect{n}$ are distinct.
    The $\ell$-point correlation function measures the degree of clustering of $\ell$ points
     of $\{x(n)\}_{n<N}$ in a uniformly thrown $\frac{1}{N}$-neighbourhood on the torus, $[0,1]$. A sequence has Poissonian $\ell$-point correlation if $R_{\ell}(\cX_{\alpha}^d,f)$ converges to $\expect{f}:=\int_{\R^{\ell-1}}f(\vect{x})\rd \vect{x}.$ A well-known theorem of Rudnick and Sarnak studies the $\ell =2$ case.
    \begin{theorem}[{\cite[Theorem 1]{RS}}]\label{thm:RS}
        For $d\in\Z_{>1}$ and almost all choices of $\alpha$, the sequence $\cX_{\alpha}^d$ has Poissonian pair ($\ell=2$) correlation. 
    \end{theorem}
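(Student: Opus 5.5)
The plan follows Rudnick--Sarnak's variance method: compute the mean of $R_2^N(\cX_\alpha^d,f)$ over $\alpha$ in a compact interval $\alpha\in[A,2A]$, show its variance is $O(N^{-\delta})$, and then pass from a sparse subsequence of $N$ to all $N$. By Poisson summation in $k$,
\begin{equation*}
R_2^N(\cX_\alpha^d,f)=\frac{1}{N^2}\sum_{m\in\Z}\hat f\Big(\frac mN\Big)\sum_{\substack{n_1\neq n_2\le N}} e\big(m\alpha(n_1^d-n_2^d)\big).
\end{equation*}
The $m=0$ term is $\hat f(0)\,N(N-1)/N^2\to\expect{f}$. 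Hence it suffices to show that the remaining terms,
\begin{equation*}
E_N(\alpha):=\frac1{N^2}\sum_{m\neq0}\hat f(m/N)\sum_{n_1\ne n_2} e\big(m\alpha(n_1^d-n_2^d)\big),
\end{equation*}
tend to zero for almost every $\alpha$.

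For the first step, integrate $E_N$ against a smooth bump $\psi(\alpha)$ supported in $[A,2A]$. Each term becomes $\hat\psi(m(n_1^d-n_2^d))$, which decays rapidly because $|m(n_1^d-n_2^d)|\ge 1$. Since $\hat f$ is Schwartz, the effective range of $m$ is $|m|\ll N^{1+\epsilon}$, so the mean is $O(N^{-1+\epsilon})$. For the second step, expand $\int|E_N|^2\psi$. This yields $N^{-4}\sum_{m,m'}\hat f(m/N)\overline{\hat f(m'/N)}$ times a count of $(n_1,n_2,n_3,n_4)$ with
\begin{equation*}
m(n_1^d-n_2^d)=m'(n_3^d-n_4^d),
\end{equation*}
up to a rapidly decaying weight in the mismatch. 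The variance is therefore bounded by $N^{-4}$ times the number of solutions with $|m|,|m'|\ll N^{1+\epsilon}$ and $n_i\le N$. To count these, fix the pair of differences $(a,b)=(n_1^d-n_2^d,\,n_3^d-n_4^d)$. The equation $ma=m'b$ has $\ll N^{1+\epsilon}\gcd(a,b)/\max(|a|,|b|)$ solutions in $(m,m')$. Each integer $a$ has $\ll |a|^\epsilon$ representations as $n_1^d-n_2^d$, by a divisor bound using the factorization of $x^d-y^d$ and the fact that $n_1-n_2$ divides $a$. Summing with the standard bound $\sum_{a,b\le X}\gcd(a,b)/\max(a,b)\ll X^{1+\epsilon}$, and weighting by the representation counts, gives a total of $\ll N^{2+d\cdot0+\epsilon}\cdot N^{1+\epsilon}$, i.e. $\ll N^{3+\epsilon}$ solutions. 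In that estimate the diagonal case $a=\pm b$ contributes $N^2\cdot N$, and the off-diagonal terms have to be handled more carefully. This would give $\mathrm{Var}\ll N^{-1+\epsilon}$.

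I expect this count to be the main obstacle, namely bounding the off-diagonal quadruples uniformly in $d$. The fallback is Rudnick--Sarnak's own argument: the number of $(m,n_1,n_2)$ with $m(n_1^d-n_2^d)=h$ is at most the number of divisor-type factorizations of $h$, so $\ll |h|^\epsilon$. Summing over $h\ll N^{d+1+\epsilon}$ with the weight restricting to values hit by both sides then gives $\ll N^{-4}\cdot N^{3+\epsilon}$ directly.

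Finally, Chebyshev's inequality and Borel--Cantelli along $N_j=j^{2}$ (so that $\sum N_j^{-1+\epsilon}<\infty$) give $E_{N_j}(\alpha)\to0$ almost everywhere. For $N_j\le N<N_{j+1}$, we have $N_{j+1}/N_j\to1$. Sandwiching $f$ between nonnegative test functions, after first reducing to $f\ge0$ by linearity, and using monotonicity of the counts in $N$ then extends the limit to all $N$. Taking a countable dense family of $f$ and a countable union over $A$ completes the proof.
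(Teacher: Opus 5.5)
The paper gives no proof of this statement: it is quoted from Rudnick--Sarnak \cite{RS} as background. The paper's own determinant-method machinery is built for $\ell\ge 3$ and very large $d$, so it could not replace it.

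Your argument, once you rely on the fallback count, is essentially Rudnick--Sarnak's and is correct. The key facts are these:
\begin{itemize}
\item For $h\neq 0$ one has $r(h):=\#\{(m,n_1,n_2): m(n_1^d-n_2^d)=h\}\le 4(d-1)\tau(|h|)^2$. To see this, pick $m\mid h$, then $j=n_1-n_2\mid h/m$; then $n_2$ is a root of a polynomial of degree $d-1$ with leading coefficient $dj\neq 0$.
\item $r(0)=0$, since $m\neq 0$ and $n_1\neq n_2$ are positive.
\item Hence $\sum_h r(h)r(h+k)\le \max_h r(h)\cdot\sum_h r(h)\ll N^{\epsilon}\cdot N^{3+\epsilon}$, uniformly in the mismatch $k$. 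This gives the second-moment bound $N^{-1+\epsilon}$.
\end{itemize}
Borel--Cantelli along $N_j=j^2$ and the sandwich then finish the job; do the sandwich on intervals with rational endpoints, absorbing the change of scale via $N_{j+1}/N_j\to 1$.

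What makes this work for every $d\ge 2$ is that one side of the variance equation, $m(n_1-n_2)P(n_1,n_2)$, factors. Fixing it leaves only $N^{\epsilon}$ choices on the other side. For $\ell\ge 3$ the forms $a_1n_1^d+\dots+a_\ell n_\ell^d$ admit no such factorisation, which is why the paper needs the stratified point count and large $d$.

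You should drop your first counting route, because as written it fails for $d\ge 3$. Summing $\gcd(a,b)/\max(|a|,|b|)$ over all integers $|a|,|b|\le X=N^d$ gives $\asymp X\log X$. Together with the $N^{1+\epsilon}$ choices of $(m,m')$, that yields $N^{d+1+\epsilon}$, not $N^{3+\epsilon}$. The step you wrote as $N^{2+d\cdot 0+\epsilon}$ silently replaces the sparse set of differences of $d$-th powers (about $N^2$ values of size up to $N^d$) by all integers up to $N^d$; the two agree only when $d=2$.

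Two simplifications are also available:
\begin{itemize}
\item Since $n^d\in\mathbb{Z}$, the sequence depends only on $\alpha \bmod 1$. Integrating over $[0,1]$ therefore gives exact orthogonality, with no mismatch weight, no bump $\psi$, and no union over $A$.
\item Since you bound the full second moment of $E_N$, the separate computation of its mean is redundant.
\end{itemize}
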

     Moreover, the theorem is true if $d$ is a non-zero real \cite{RudnickTechnau2021,AE-BM}, see also \cite{kerr2025}. Our main result is the following extension of Theorem \ref{thm:RS} to higher 
     order correlation functions.
     \begin{theorem}\label{thm:main corr}
        For almost all choices of $\alpha$, the sequence $\cX_{\alpha}^{d}$ has Poissonian $\ell$-point correlation for all $d$ satisfying
        \begin{align}
            d > (2\ell)^{4\ell} = : d_{\ell}
        \end{align}
        and $\ell\ge3$.
        
    \end{theorem}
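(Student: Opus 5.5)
We will follow the variance method of Rudnick--Sarnak, upgraded to $\ell$-point correlations. By a standard reduction it suffices to treat $\alpha$ in a fixed compact interval $[A,2A]$ with $A>0$. We also take $f$ from a countable dense family of smooth test functions with compactly supported Fourier transform. Expanding the periodised sum in \eqref{def ell corr func} by Poisson summation gives
\begin{align*}
R_\ell^N(\cX_\alpha^d,f)=\frac{1}{N^{\ell}}\sum_{\vect{m}\in\Z^{\ell-1}}\hat f\Big(\frac{\vect m}{N}\Big)\sum^{\ast}_{\vect n\in[N]^\ell} e\big(\alpha(m_1(n_1^d-n_2^d)+\dots+m_{\ell-1}(n_{\ell-1}^d-n_\ell^d))\big).
\end{align*}
We rewrite the phase as $\alpha\sum_{i=1}^\ell c_i n_i^d$, where $c_1=m_1$, $c_i=m_i-m_{i-1}$ and $c_\ell=-m_{\ell-1}$, so that $\sum_i c_i=0$. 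The term $\vect m=0$ gives $\expect{f}+O(1/N)$. The terms in which some partial collection of the $c_i$ vanishes are treated by induction on $\ell$, or shown to be of lower order. We then bound the mean square over $\alpha$ of the remaining sum $E_N(\alpha)$.

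To do this, we integrate $|E_N(\alpha)|^2$ against a smooth bump in $\alpha$. The result is
\begin{align*}
\frac{1}{N^{2\ell}}\sum_{\vect m,\vect m'}\Big|\hat f\Big(\frac{\vect m}{N}\Big)\hat f\Big(\frac{\vect m'}{N}\Big)\Big|\#\Big\{(\vect n,\vect n')\in[N]^{2\ell}:\Big|\sum_i c_in_i^d-\sum_i c_i'n_i'^d\Big|\lesssim 1\Big\},
\end{align*}
up to rapidly decaying tails. Since $|c_i|\ll N$ and $n_i\le N$, this is a count of near-solutions, and after fixing the coefficients essentially of integer points, on the diagonal hypersurface
\[
\sum_{i=1}^{2\ell}a_ix_i^d=0
\]
in $2\ell$ variables. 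We need this count to be $\ll N^{2\ell-1-\delta}\cdot N^{2(\ell-1)}/N^{2(\ell-1)}$, that is, to beat the trivial bound by a power $N^{-\delta}$ once summed over $\vect m,\vect m'$. That power saving, via Chebyshev, gives $\operatorname{meas}\{\alpha:|E_N(\alpha)|>\epsilon\}\ll N^{-\delta}$. Borel--Cantelli along a sparse sequence $N_j=\lfloor j^{C}\rfloor$, together with the monotonicity/interpolation argument between consecutive $N_j$ (valid since $f\ge0$ can be sandwiched), then gives almost sure convergence.

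The key input is the counting. We split the solutions into \emph{special} ones and the rest. The special solutions lie on the lower-dimensional linear subvarieties obtained by pairing variables, $x_i=\pm x_j$ with $a_i+a_j=0$ or partitions with vanishing coefficient sums. These we count directly. They contribute exactly the diagonal terms matching the Poisson/Gaussian moment structure, and after summing over $\vect m,\vect m'$ they give $O(N^{-1})$ relative size because of the constraints they impose on the coefficients. For the non-special solutions we will apply a uniform determinant method (Heath-Brown/Salberger type, in the form of Browning--Heath-Brown--Salberger). This bounds the points of height $\le N$ on a degree-$d$ hypersurface outside its lines and linear subspaces by $N^{\dim-1+O(1/\sqrt d)+\epsilon}$, or with exponent savings like $N^{n-2+\epsilon}$ when $d$ is large relative to $n=2\ell$. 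The bound must be uniform in the coefficients $a_i$, up to $N^{\epsilon}$ or mild powers of $\log\max|a_i|$. Iterating over the stratification yields a power saving precisely when $d$ exceeds something like $(2\ell)^{4\ell}$, which explains the threshold $d_\ell$.

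The main obstacle is this stratified, coefficient-uniform count for diagonal hypersurfaces. We must show that every positive-dimensional family of solutions not captured by the explicit pairings is negligible, and that the determinant-method auxiliary polynomials do not lose powers of the coefficient heights. We will try a projection/hyperplane-slicing induction on the number of variables, applying the determinant method at each step to curves or surfaces of degree $d$. The combinatorial cost of the strata is what drives the exponent $4\ell$.
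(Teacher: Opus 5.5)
Your outer framework agrees with the paper, which packages it as \cite[Proposition 7.1]{TechnauYesha2020}: Poisson summation, removing degenerate coefficient vectors by induction on $\ell$, first and second moments, Chebyshev and Borel--Cantelli along a polynomially sparse sequence, then sandwiching. The genuine gap is the counting step. You leave it as something you ``will try'', and the bounds you do quote are far too weak.

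There are $\asymp N^{2\ell-2}$ coefficient pairs and the normalisation is $N^{-2\ell}$. So you need $\ll N^{2-\delta}$ solutions $(\vect{n},\vect{n}')$ on average over the coefficients; your displayed target $N^{2\ell-1-\delta}$ is far too generous. Dimension-growth type bounds such as $N^{\dim-1+O(1/\sqrt d)}$ or $N^{n-2}$ leave you short by a factor of about $N^{2\ell-4}$. Even the Salberger--Wooley bound $B^{\lfloor (n+1)/2\rfloor}$ fails if used uniformly in $\vect{a}$.

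What the paper proves in Theorem~\ref{thm:main-counting} is much stronger, uniformly in $\vect{a}$:
\begin{itemize}
\item points with no vanishing binary subsum number $\ll B^{\sum_{r=1}^{n-1}(r+1)L(d,n)^{-1/r}+\vep}$, an exponent tending to $0$ as $d\to\infty$;
\item in general, $\mathcal{N}_{Q_{\vect{a}}}(B)\ll B^{\phi(m(\vect{a}),d,n)+\vep}$.
\end{itemize}
The idea your proposal lacks is geometric degree control. Salberger's Theorem~\ref{thm:salcurves} forces any curve on $X_n$ of degree $<L(d,n)$ onto a second diagonal hypersurface. A projection induction (Lemmas~\ref{le no small degree curves} and~\ref{lem:degreecontrol}) upgrades this to: every subvariety of degree $<L(d,n)$ lies in some locus $\sum_{i\in I}a_ix_i^d=0$. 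Only with this does the determinant method give exponents tending to $0$. The Salberger--Wooley covering (Lemma~\ref{le number of aux surfaces}) puts the points on $B^{\sum_{r=3}^{n-1}(r+1)L^{-1/r}+\vep}$ surfaces of bounded degree. Off the subsum loci, these surfaces and all curves on them have degree $\ge L(d,n)$. Theorem~\ref{lem:matteo} then gives only $B^{3/\sqrt{L}+\vep}+B^{3/(2\sqrt{L})+2/L+\vep}$ points on each.

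Your description of the special locus is also wrong in ways that matter for the averaging. A binary piece $a_ix_i^d+a_jx_j^d=0$ has nonzero solutions whenever $-a_ia_j\in\mathbb{Z}^{[d]}$ (e.g.\ $a_i=1$, $a_j=-2^d$, $x_i=2x_j$), not only when $a_i+a_j=0$. So the rarity of such coefficient vectors cannot come from $a_j$ being determined by $a_i$. It must come from Tolev's bound of $O(N^{2/d+\vep})$ such pairs $(a_i,a_j)$, which gives $|\cA_{\ell}(N,m)|\ll N^{\ell-1-m(2-2/d)+\vep}$.

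Moreover, the relevant loci are partitions into vanishing \emph{subsums} $\sum_{i\in I}a_ix_i^d=0$, not vanishing coefficient sums. For $|I|\ge 3$ these are nonlinear and contribute error terms, not Poisson main terms. They must be counted recursively: Lemma~\ref{le 2 dim count} for $|I|=2$, Lemma~\ref{lem:curves} for $|I|=3$, and induction on $n$ beyond that.

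Weighing $B^{\phi(m,d,n)}$ against $|\cA_{\ell}(N,m)|$ is what reduces everything to $\phi(0,d,\ell)<1$ and $\phi(0,d,2\ell)<2$, and hence to the threshold $d_\ell$. Without it, your claim that the stratification yields a power saving precisely for $d>(2\ell)^{4\ell}$ is unsupported.

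Finally, since $d\in\Z$ the problem is $1$-periodic in $\alpha$, so integrate over $[0,1]$. That way you count exact solutions of the homogeneous equation. A smooth bump on $[A,2A]$ instead produces $Q_{\vect{a}}(\vect{n})=h$ with small $h\neq 0$, which projective point counting does not cover.
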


      Theorem \ref{thm:RS} is one of the seminal results in the field. When $d=2$, this is particularly interesting since the values $x(n)$ represent the energy levels for the high energy boxed harmonic oscillator. Thus, Theorem \ref{thm:RS} provided (almost everywhere) evidence towards the Berry--Tabor conjecture which connects the local statistics of such energy levels to the dynamical properties of the underlying classical system (i.e integrable vs chaotic). 
    
    Aside from this application to quantum chaos, building on work of van der Corput among many others (see \cite{KupiersNiederreiter1974} for history), it is very common to study the local statistics of sequences. A key challenge is to show that the correlations of such sequences converge to the Poissonian limit. Since
    higher $\ell$-point correlation 
    functions tend not to concentrate 
    in an $L^2$-sense, very little is known 
    about higher-order correlation functions.
    This was first
    observed by Rudnick 
    and Sarnak \cite{RS};
    see also \cite[Appendix C]{TechnauWalker2020} for more discussion. 
    It has long been expected (see for example \cite{Marklof2000,R,RS,RSZ,RS2024}) that for almost any choice of $\alpha$ the sequence $x(n)$ should have Poissonian local statistics for any $d\ge 2$. 

    For sequences on the torus there have been few deterministic or metric results. Pellegrinotti \cite{P} showed that the first four moments of the distribution of quadratic polynomials with random coefficients converge to the Poissonian limit. Surprisingly, Sinai \cite{S} showed that the moments beyond the $26^{\text{th}}$ do not converge to the Poissonian limit. In fact, this observation led some to believe that in order to produce Poissonian statistics for higher moments, one needs an increasing number of real parameters, Theorem \ref{thm:main corr} contradicts this philosophy. For generalised monomials where the exponent $d$ is a small real (e.g $\le 1/3$) the correlations are known to be Poissonian \cite{LST,LT1,LT2} and when the sequence grows with a lacunary rate then the sequence is known to have Poissonian statistics almost everywhere \cite{RZ}.

    The local statistics of real valued sequences (not on the torus) are comparatively well understood (see for example \cite{EMM2005, Marklof2003, KMW2026} for results concerning $\ell=2$ and \cite{ABR,
    %vanderkam1999,vanderkam1999secondarticle,
    vanderkam2000, RudnickKurlberg1999} for higher order correlations -- though this list is incomplete). However, in our setting the periodicity makes it hard to keep track of neighbours which make the gap statistics very hard to control.

    The following extension of Theorem \ref{thm:main corr} shows that we can replace the Lebesgue measure on $[0,1)$ with any measure satisfying a 
    growth condition on its Fourier coefficients. 
    \begin{theorem}\label{thm Hausdorff correlation}
    Let $d>d_{4\ell}$. 
    Suppose $\mu$ is a finite Borel measure 
    on $[0,1]$ whose
    Fourier transform $\widehat{\mu}(\xi) = \int_0^1 
    e(-\xi \alpha) d\mu(\alpha)$
    decays on average like 
    $\vert \xi \vert^{-1/2}$
    in the following sense.
    If $0<s<1$, then
    \begin{equation}\label{eq series converges}
    \vert \widehat{\mu}(0)\vert^2 + 
    \sum_{u\neq 0} 
    \vert \widehat{\mu}(u)\vert^2 \cdot \vert u\vert^{s-1}<\infty.
    \end{equation}
    Then, 
    $\cX_{\alpha}^d$
    has Poissonian ${\ell}$-point correlations 
    for $\mu$-almost any $\alpha\in [0,1]$.
    \end{theorem}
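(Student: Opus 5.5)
We prove Theorem \ref{thm Hausdorff correlation} by the standard variance method, now taken with respect to $\mu$, and run the Fourier analysis in $\alpha$ so that $\widehat{\mu}$ appears explicitly.

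\textbf{Step 1: reduction to a variance bound along a sparse sequence.}
By a routine approximation argument we may take $f$ with compactly supported Fourier transform, or truncate $\widehat f$ smoothly. Expanding $f$ in Fourier series gives
\begin{equation*}
R_\ell^N(\alpha,f)=\frac{1}{N^{\ell}}\sum_{\vect{m}\in\Z^{\ell-1}}\widehat f\Big(\frac{\vect m}{N}\Big)\sum^{\ast}_{\vect n\in[N]^\ell} e\big(\alpha\, Q_{\vect m}(\vect n)\big),
\end{equation*}
where $Q_{\vect m}(\vect n)=\sum_{i} m_i(n_i^d-n_{i+1}^d)=\sum_j h_j n_j^d$ with $h_j=m_j-m_{j-1}$. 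The term $\vect m=0$ gives $\expect{f}+O(1/N)$.

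Call the remainder $E_N(\alpha)$. We aim for
\begin{equation*}
\int |E_N(\alpha)|^2\, d\mu(\alpha)\ll N^{-\delta}.
\end{equation*}
Then Chebyshev and Borel--Cantelli along $N_k=\lfloor k^{A}\rfloor$ give $E_{N_k}(\alpha)\to0$ for $\mu$-a.e.\ $\alpha$. Passing to all $N$ uses the usual monotonicity/sandwich argument for nonnegative $f$, since $N_{k+1}/N_k\to1$.

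\textbf{Step 2: Fourier transfer.}
Expanding the square gives
\begin{equation*}
\int|E_N|^2\,d\mu=\frac{1}{N^{2\ell}}\sum_{\vect m,\vect m'\neq0}\widehat f\Big(\frac{\vect m}{N}\Big)\overline{\widehat f\Big(\frac{\vect m'}{N}\Big)}\sum_{\vect n,\vect n'}\overline{\widehat\mu}\big(Q_{\vect m}(\vect n)-Q_{\vect m'}(\vect n')\big).
\end{equation*}
Group the terms according to $u=Q_{\vect m}(\vect n)-Q_{\vect m'}(\vect n')\in\Z$, and let $r_N(u)$ denote the weighted number of representations of $u$. We then have
\begin{equation*}
\int|E_N|^2\,d\mu\le \frac{1}{N^{2\ell}}\sum_u |\widehat\mu(u)|\,r_N(u).
\end{equation*}
By Cauchy--Schwarz with weights $|u|^{\pm(s-1)/2}$,
\begin{equation*}
\sum_u |\widehat\mu(u)|\,r_N(u)\le\Big(\sum_u|\widehat\mu(u)|^2|u|^{s-1}\Big)^{1/2}\Big(\sum_u r_N(u)^2|u|^{1-s}\Big)^{1/2},
\end{equation*}
where $u=0$ is handled separately using $|\widehat\mu(0)|$. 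The first factor is finite by \eqref{eq series converges}.

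Since $|u|\ll N^{d+1}$, it remains to bound $\sum_u r_N(u)^2$, weighted by at most $N^{(d+1)(1-s)}$. Note that $\sum_u r_N(u)^2$ counts solutions of
\begin{equation*}
Q_{\vect m_1}(\vect n_1)-Q_{\vect m_1'}(\vect n_1')=Q_{\vect m_2}(\vect n_2)-Q_{\vect m_2'}(\vect n_2'),
\end{equation*}
a diagonal degree-$d$ equation in $4\ell$ variables with coefficients of size $\ll N^{1+\epsilon}$. This is exactly a $4\ell$-fold version of the counting problem that underlies Theorem \ref{thm:main corr} in the $\ell$-point case. That explains the hypothesis $d>d_{4\ell}$: we invoke the paper's stratified determinant-method count (special solutions on lower-dimensional subvarieties plus a small number of generic ones) with $4\ell$ in place of $\ell$.

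\textbf{Step 3: matching exponents.}
The special solutions, coming from coincidences $n_i=n_j$ paired with coefficient cancellations, should contribute essentially the trivial diagonal count. The nonspecial ones should contribute a power saving $N^{-\eta(d)}$ with $\eta(d)\to\infty$ in the relevant normalisation. The loss $N^{(d+1)(1-s)/2}$ has to be absorbed.

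This is where I expect the main obstacle. The loss grows with $d$, so a naive bound fails. The remedy I would try first is to avoid the crude estimate $|u|\le N^{d+1}$. Instead, split $u$ dyadically and observe that the generic-point count in the shell $|u|\sim U$ carries the saving. Alternatively, exploit that the diagonal/special contributions have $u=0$ or $u$ structured, so that the special terms are controlled by $\widehat\mu(0)$ and by the average decay.

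Verifying that the determinant-method bound is strong enough to beat $N^{(d+1)(1-s)/2}$ with some $\delta>0$ to spare, uniformly in $0<s<1$ as allowed, is the crux. It is precisely why the larger threshold $d_{4\ell}$ rather than $d_\ell$ is imposed.
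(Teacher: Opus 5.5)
Your Steps 1 and 2 follow the paper's route. Your weighted Cauchy--Schwarz is Lemma~\ref{le Cauchy bound}, which the paper applies (in Lemma~\ref{le transfer fractal}) to the trigonometric polynomial $|T_N|^2$. Its Fourier coefficients play the role of your $r_N(u)$, so $\sum_u r_N(u)^2$ is the (absolutely weighted) $4\ell$-variable diagonal count the paper uses to bound $\|T_N\|_{L^4}^4$.

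The gap is in Step 3, where you assert that the naive bound fails and leave the argument open. It does not fail. Hypothesis \eqref{eq series converges} holds for \emph{every} $s\in(0,1)$, so you may fix one $s$ as close to $1$ as you like. The constant $\sum_u|\widehat\mu(u)|^2|u|^{s-1}$ then depends on $s$, but no uniformity in $s$ is needed. Suppose the $4\ell$-analogue of \eqref{eq expectation} gives $N^{-4\ell}\sum_u r_N(u)^2\ll N^{-\eta}$ for some fixed $\eta=\eta(d,\ell)>0$. Use $|u|\le N^{d+2}$, and handle $u=0$ via $r_N(0)\le(\sum_u r_N(u)^2)^{1/2}$. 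Your display then gives $\int|E_N|^2\,d\mu\ll_{s,\mu} N^{(d+2)(1-s)/2-\eta/2}$. Choosing $1-s<\eta/(2(d+2))$ yields $N^{-\eta/4}$, which is exactly what your Chebyshev and Borel--Cantelli step needs. This is the paper's choice $s=1-2\rho$ with $\rho<\varepsilon/C$ and $U_N=N^{d+1+\varepsilon}$: the loss is only $U_N^{\rho}$, which is beaten by any fixed power saving.

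So the dyadic splitting and structural remedies you propose are unnecessary, and two of your explanatory remarks are off. First, the threshold $d>d_{4\ell}$ (rather than $d_\ell$) is not there to beat the $s$-loss. It is needed only because the $\mu$-second moment is controlled by a Lebesgue fourth moment, i.e.\ by the $4\ell$-variable count. Second, you do not need a saving $\eta(d)\to\infty$; any fixed $\eta>0$ suffices. In fact the saving can shrink as $d$ grows (compare the term $N^{-1/d+\varepsilon}$ in the proof of \eqref{eq expectation}). That is why $s$ must be chosen depending on $d$, which is harmless since $d$ is fixed.
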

    
     We say $\alpha$ is diophantine of type $\tau$ if there exists a $c_\alpha>0$ such that $|\alpha-p/q| \ge c_\alpha q^{-\tau}$ for all coprime $p$ and $q$. It is immediate that the diophantine properties of $\alpha$ will dictate the behavior of the correlations (see for example \cite[Theorem 2]{RSZ}). 
     Rudnick, Sarnak and Zaharescu
     \cite[p. 38]{RSZ} conjecture
     that any diophantine number with sufficiently 
     square-free denominators in 
     its continued fraction convergents 
     should lead to Poissonian behavior of 
     the gaps in $\cX_{\alpha}^2$. An application of Theorem \ref{thm Hausdorff correlation} shows that for almost every diophantine $\alpha$, the sequence $\cX_{\alpha}^d$ will have Poissonian $\ell$-correlations for $d>d_{\ell}$.

    In fact, while Theorem \ref{thm:main corr} applies to a much larger set of $\alpha$, Rudnick and Sarnak \cite[Abstract]{RS} conjecture that, if $\alpha$ is badly approximable and $d\ge 2$ is a fixed integer, then the normalised spacings between elements of $\cX_\alpha^d$ have Poissonian statistics. While there have been some steps towards this conjecture (see e.g. Heath-Brown \cite{Heath-Brown2010}), no one has confirmed it for any degree or statistic. The following corollary states the existence of a `large' subset of badly approximable numbers which satisfy this conjecture with respect to the $\ell$-point correlation function.

     \begin{corollary}\label{cor Hausdorff}
        Fix $\ell>1$ and let $d > d_{4\ell}$. Then there exists a Hausdorff dimension $1$ subset $\cA$ of the badly approximable numbers such that $\cX_{\alpha}^d$ has Poissonian $\ell$-point correlations for all $\alpha \in \cA$.
     \end{corollary}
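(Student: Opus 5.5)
The plan is to deduce Corollary \ref{cor Hausdorff} from Theorem \ref{thm Hausdorff correlation}. We need measures supported on sets of badly approximable numbers whose supports have Hausdorff dimension arbitrarily close to $1$ and whose Fourier transforms satisfy \eqref{eq series converges}. For $M\ge 2$, let $F_M\subset[0,1]$ be the set of irrationals whose continued fraction partial quotients are all at most $M$. Each $F_M$ consists of badly approximable numbers, and by Jarník's theorem $\dim_H F_M\to 1$ as $M\to\infty$. We would take $\mu_M$ to be a measure supported on $F_M$ with polynomial Fourier decay $|\widehat{\mu}_M(\xi)|\ll |\xi|^{-\eta_M}$. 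Such measures are supplied by the results of Kaufman and of Queffélec--Ramaré, and more recently Jordan--Sahlsten. In the refined versions, $\eta_M$ can be taken close to $\dim_H F_M/2$, hence close to $1/2$ once $M$ is large. If one prefers to avoid quoting the sharp exponent, the plan is to use only the fact that the measure has Fourier dimension close to $\dim_H F_M$.

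Next we check \eqref{eq series converges}, which is the main obstacle. The pointwise bound $|\widehat\mu(u)|^2\ll |u|^{-2\eta}$ gives convergence of $\sum |u|^{s-1-2\eta}$ only if $2\eta>s$. Since $s<1$ is at our disposal (the hypothesis is stated for some $0<s<1$), we fix $s$ small, say $s=\tfrac12$, and choose $M$ large enough that $2\eta_M>s$. A softer route also works and avoids sharp Fourier decay entirely. Partial summation reduces \eqref{eq series converges} to the bound $\sum_{|u|\le U}|\widehat\mu(u)|^2\ll U^{1-s'}$ for some $s'>s$. This average bound follows from the energy estimate $\sum_{|u|\le U}|\widehat\mu(u)|^2\ll U^{1-t}$, valid for any measure with finite $t$-energy $I_t(\mu)<\infty$. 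By Frostman's lemma, such measures exist on $F_M$ for every $t<\dim_H F_M$. Choosing $t>s$ then gives \eqref{eq series converges} using only Frostman measures on $F_M$. This makes the step robust: we would carry it out via the energy identity $I_t(\mu)\asymp\sum_u|\widehat\mu(u)|^2|u|^{t-1}$.

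Finally, Theorem \ref{thm Hausdorff correlation} applied to $\mu=\mu_{M,t}$ shows that $\mu$-almost every $\alpha$ gives Poissonian $\ell$-point correlations for $\cX_\alpha^d$ when $d>d_{4\ell}$. Let $E_{M,t}\subset F_M$ be this full-measure set. It has positive $\mu$-measure, and $\mu$ has finite $t$-energy. Since the finite-energy measure $\mu$ charges $E_{M,t}$, the mass distribution principle gives $\dim_H E_{M,t}\ge t$. Taking $\cA=\bigcup_{M,t}E_{M,t}$ over sequences $M\to\infty$ and $t\uparrow\dim_H F_M$ yields a set of badly approximable numbers with $\dim_H\cA=1$ on which the correlations are Poissonian.
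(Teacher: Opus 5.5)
Your Frostman route is essentially the paper's proof. It takes Frostman measures of finite $t$-energy on the badly approximable numbers, uses the comparability of $I_t(\mu)$ with the discretised sum in \eqref{eq series converges} to verify the hypothesis, applies Theorem \ref{thm Hausdorff correlation}, and lets $t\uparrow 1$; you also spell out the mass-distribution and $F_M$-union steps that the paper leaves implicit.

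The one point to correct is that you cannot fix $s=\tfrac12$. The proof of Theorem \ref{thm Hausdorff correlation} (Lemmas \ref{le Cauchy bound} and \ref{le transfer fractal}) uses \eqref{eq series converges} with $s=1-2\rho$, where $\rho<\varepsilon/C$ and $C\approx d+1$, so $s$ must be close to $1$. This does no harm to your final union over $M\to\infty$, $t\uparrow\dim_H F_M$, once you discard small $t$. It does mean your Fourier-decay variant would need decay exponents $\eta_M\to\tfrac12$, not merely $2\eta_M>\tfrac12$.
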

    
    \begin{proof}
    The $s$-energy
    $$
    I_s(\mu)=\iint_{[0,1]^2} |x-y|^{-s} d \mu(x) d \mu(y)
    $$
    of $\mu$ is well-known to be 
    comparable to its 
    discretisation 
    \eqref{eq series converges},
    see \cite[Corollary 2.7]{hareMohantyRoginskaya2007}.
    Hence,
    Theorem \ref{thm Hausdorff correlation}
    and Frostman's Lemma implies directly 
    that any set $\cS\subseteq[0,1]$ 
    of Hausdorff dimension $1$ contains a 
    set $\cA\subseteq \cS$ 
    so that $\cX_{\alpha}^d$ has Poissonian 
    $\ell$-point correlations for any $\alpha \in \cA$
    once $d>d_{4\ell}$. Choosing $\cS$ to be the set of badly approximable numbers proves the corollary.

    \end{proof}

    \subsection{Stratified point counting}

    Fix a degree $d\ge 2$ and dimension $n$, and consider the homogeneous diagonal form\footnote{In keeping with convention, when discussing the counting problem in projective space we label the points $0$ to $n$.}
    \begin{align}\label{def Q}
        Q_{\vect{a}}(\vect{x}):=  a_0 x_0^d + \dots + a_n x_n^d,
    \end{align}
    with coefficient vector $\vect{a}\in (\Z_{\neq 0})^{n+1}$.
    A natural question in diophantine analysis is to count solutions to the equation $Q_{\vect{a}}(\vect{x})=0$ with entries taken from a box $[-B,B]^{n+1}$. To that end, let $X_n \subset \mathbb{P}^{n}$
    denote the diagonal hypersurface defined by $Q_{\vect{a}}(\vect{x}) = 0 $. For any point $x \in X_n(\mathbb{Q})$, we take a representative $\mathbf{x} =(x_0,\ldots, x_n) \in \mathbb{Z}^{n+1}$ with $\gcd(x_0, \ldots, x_n)=1$. Then we write $
    H(x) = \vert \vert \mathbf{x} \vert \vert_\infty
    $
    and define the counting function
    \begin{align*}
        \mathcal{N}_{Q_{\vect{a}}}(B) : = \# \{ x \in X_n(\mathbb{Q}): H(x) \leq B\}.
    \end{align*}
    For general non-singular hypersurfaces,
    Browning--Heath-Brown~\cite{BHB} showed that 
    \begin{align}
        \mathcal{N}_{Q_{\vect{a}}}(B)\ll_{d,n,\vep}B^{n-1+\vep}\quad \text{ for all } \vep >0, 
    \end{align}
    uniformly in $\vect{a}$. 
    However, one expects that the diagonal structure of the form should allow for improvements. Indeed,
    Salberger--Wooley \cite{SW} showed that once $d$ is large, diagonal solutions dominate. In particular, once $d>(2n+2)^{4n+4}$, this provides the bound
    \begin{align}\label{SW}
        \mathcal{N}_{Q_{\vect{a}}}(B) \ll_{d,n}B^{\lfloor\frac{n+1}{2}\rfloor}. 
    \end{align}
    This bound comes from the diagonal solutions (those lying on maximal dimensional $\mathbb{Q}$-linear subspaces, should such exist) and they show that the off-diagonal solutions contribute $O_{d,n}(B^{\lfloor\frac{n+1}{2}\rfloor-\frac{1}{3}})$.
    
    The purpose of this paper is to improve this upper bound on $\mathcal{N}_{Q_{\vect{a}}}(B)$ by showing that the count depends on the coefficients $\mathbf{a}$ and can be massively reduced for generic\footnote{In the sense of being drawn uniformly
    at random from a homogeneous expanding box.} $\mathbf{a}$. Before stating the main counting theorem, let $m=m(\mathbf{a})$ denote the largest $0 \leq k \leq \frac{n+1}{2}$ such that there exist $k$ disjoint pairs of distinct indices $(i_1,j_1), \ldots, (i_k,j_k)$ with
    \[
    -a_{i_\ell}a_{j_\ell} \in \mathbb{Z}^{[d]} \quad{} \forall \ell=1,\ldots,k
    \]
    where $\mathbb{Z}^{[d]}$ denotes the set of $d^\text{th}$ powers. Furthermore, let
    \begin{align}\label{def L dn}
        L(d,n) : = 1+ \frac{2(d-n+1)}{n^2-n}
    \end{align}
    and let 
    \begin{align*}
        \phi(m,d,n) : = \max_{0\le s \le m} \left(s+  \sum_{r=1}^{n-2s-1}\frac{r+1}{\sqrt[r]{L(d,n)}}\right).
    \end{align*}
    Then our counting theorem states that the count is dominated not just by diagonal terms, but all the ``quasi-diagonal" terms enumerated by $m$.
    \begin{theorem}\label{thm:main-counting}
        Let $d>n> 3$. Let $W$ denote the subset of points $x \in X_n(\mathbb{Q})$ for which there exists a pair of distinct indices $i,j$ such that
        \[
        a_ix_i^d+ a_jx_j^d =0.
        \]
        Then, for all $\vep >0$
        \[
        \#\{ x \in X_n(\mathbb{Q}) \setminus W: H(x) \leq B\}
        \ll B^{\sum_{r=1}^{n-1} \frac{r+1}{{L(d,n)}^{1/r}}+\vep}.
        \]
        Moreover, for the total number of points we have the bound
        \begin{align}\label{N bound}
            \mathcal{N}_{Q_{\vect{a}}}(B) \ll\ B^{\phi(m,d,n) +\vep} .
        \end{align}
        In both bounds, the implied constant is uniform in $\vect{a}$.
    \end{theorem}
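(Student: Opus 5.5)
The plan is to follow the Salberger--Wooley strategy. We combine the determinant method with an induction on the dimension $n$ that peels off lower-dimensional ``special'' subvarieties, and we keep track of exactly which pairs $(i,j)$ can produce binary vanishing subsums $a_ix_i^d+a_jx_j^d=0$.

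\emph{Step 1: the determinant method.} For $X_n$ of degree $d$, the projective Heath-Brown/Salberger determinant method gives the following. All rational points of height $\le B$ lie on $O(B^{\theta+\vep})$ auxiliary hypersurfaces of bounded degree, not containing $X_n$. Here $\theta$ is of size $(n+1)/d^{1/(n-1)}$ in the usual form. We optimise the choice of auxiliary degree against $d$ and $n$, so that the exponent contributed by dimension $r$ strata is $(r+1)/L(d,n)^{1/r}$. The quantity $L(d,n)=1+2(d-n+1)/(n^2-n)$ should arise as a lower bound for the degree of any curve or surface component of $X_n$ that is not contained in a ``quasi-diagonal'' linear space. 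For diagonal (Fermat-type) hypersurfaces, the low-degree subvarieties are controlled. The Salberger--Wooley type result is that any irreducible curve on $X_n$ of degree $<L(d,n)$ lies in a locus where some binary subsum vanishes. This is proved by a Wronskian/Newman--Slater type argument on linear relations among $d$-th powers of polynomials. I will take this as the key geometric input and reprove it in the form needed.

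\emph{Step 2: intersecting with the auxiliary hypersurfaces and inducting.} Let $Y=X_n\cap\{G=0\}$ with $G$ auxiliary. Each irreducible component $Z$ of $Y$ has dimension $n-2$. If $Z$ is not contained in $W$, we reapply the determinant method to $Z$. This uses the dimension-growth bound for varieties of high degree: components of small degree are excluded by the geometric input, so the degree of $Z$ is at least $L$. Iterating down through dimensions $r=n-1,n-2,\dots,1$ costs a factor $B^{(r+1)/L^{1/r}}$ at each stage. For $r=0$ the count is bounded. This yields the first bound $\sum_{r=1}^{n-1}(r+1)L^{-1/r}+\vep$. The uniformity in $\mathbf a$ comes from the uniformity of the determinant method in the coefficients, since only the degree and dimension enter.

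\emph{Step 3: the total count.} A point in $W$ has some pair with $a_ix_i^d=-a_jx_j^d$. For nonzero coordinates this forces $-a_ia_j$ (up to $d$-th powers) to be a $d$-th power, since $x_i/x_j$ is rational; if instead $x_i=x_j=0$, the equation simply drops those coordinates. Fix a maximal set of $s\le m$ disjoint such pairs. Each pair contributes a one-parameter family, i.e.\ $O(B)$ choices of $(x_i,x_j)$. The remaining $n+1-2s$ coordinates satisfy a diagonal equation of smaller dimension $n-2s$ with no vanishing binary subsum among them, which is by maximality a point off the corresponding $W$. Applying the first bound in dimension $n-2s$ gives exponents $\sum_{r=1}^{n-2s-1}(r+1)L(d,n-2s)^{-1/r}$. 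Since $L(d,n)\le L(d,n')$ for $n'<n$, this is at most the same sum with $L(d,n)$. Vanishing coordinates are handled similarly, by lowering the dimension. Summing over the finitely many configurations and taking the maximum over $s$ gives $\phi(m,d,n)$.

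The main obstacle is Step 1's geometric input. We need a sharp and uniform-in-$\mathbf a$ classification showing that every low-degree subvariety of $X_n$ not contained in $W$ has degree at least $L(d,n)$. We also need the determinant-method exponent to genuinely take the stated form at each stratum. I would first try the Wronskian approach of Salberger--Wooley applied to parametrised curves, extended to higher-dimensional components via generic curve sections.
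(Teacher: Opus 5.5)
\textbf{The binary-subsum input in Steps 1--2 is false.} Your Steps 1--2 use the same determinant-method skeleton as the paper: cover by auxiliary varieties, descend to surfaces, and finish with Salberger's surface bound (Theorem \ref{lem:matteo}). But the geometric input you plan to reprove is false once $n\ge 5$.

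A low-degree curve on $X_n$ need not lie where a \emph{binary} subsum vanishes. Let $p$ be a point on $a_0x_0^d+a_1x_1^d+a_2x_2^d=0$ and $q$ a point on $a_3x_3^d+a_4x_4^d+a_5x_5^d=0$, all coordinates nonzero. Then the line $\{[\lambda p:\mu q]\}$ lies on $X_5$ and has degree $1<L(d,5)$. Yet no $a_ix_i^d+a_jx_j^d$ vanishes identically on it.

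What Theorem \ref{thm:salcurves} plus projection actually yields (Lemmas \ref{le no small degree curves} and \ref{lem:degreecontrol}) is a vanishing subsum $\sum_{i\in I}a_ix_i^d$ with $\#I\ge 2$ of \emph{arbitrary} size. So Step 2 can only bound points having no vanishing subsum with $2\le\#I\le n-1$. That is the paper's Corollary \ref{cor:nosubsum}, and it cannot be upgraded to $X_n\setminus W$.

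\textbf{The first claim is false as stated.} Take $\mathbf a=(1,1,-2,1,1,-2)$ with $d$ even. The $\gg B^2$ points $[\lambda:\lambda:\lambda:\mu:\mu:\mu]$ with $\lambda\mu\neq0$ lie on $X_5$. None is in $W$, because $x^d+y^d=0$ and $x^d=2y^d$ have no nonzero rational solutions for even $d$. Yet $\sum_{r=1}^{4}(r+1)L(d,5)^{-1/r}\to 0$ as $d\to\infty$. Since $m(\mathbf a)=0$ here, \eqref{N bound} fails as well.

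So already for $n=5$ (and similarly for larger $n$, padding the second block) the first claim is false as stated and cannot be proved by any method. The paper's sentence deriving it from Corollary \ref{cor:nosubsum} has exactly the gap your Step 2 has.

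\textbf{Step 3 has the same blind spot, plus two bookkeeping errors.}

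First, $a_ix_i^d+a_jx_j^d=0$ with $x_ix_j\neq0$ gives $-a_i/a_j\in(\mathbb{Q}^\times)^d$, not $-a_ia_j\in\mathbb{Z}^{[d]}$. For example, $\mathbf a=(2,-2,3,-3,5,-5)$ has $m=0$ for every $d\ge3$. It nevertheless has $\gg B^3$ points $[\lambda:\lambda:\mu:\mu:\nu:\nu]$, far above $B^{\phi(0,d,5)+\varepsilon}$ for large $d$.

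Second, when you glue $s$ pairs to the remaining block, that block carries its own independent scaling. So if it has any nonzero solution you gain a further factor $B$, not merely its projective count from the first bound. For example, $\mathbf a=(1,-1,1,1,-2)$ with $d$ even has $m=1$ and $\phi(1,d,4)\to1$. Yet it has $\gg B^2$ points $[\lambda:\lambda:\mu:\mu:\mu]$.

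In general, take any partition of $\{0,\dots,n\}$ into $k$ blocks whose sub-equations each have a nonzero rational solution. It gives a $\mathbb{P}^{k-1}\subset X_n$ with $\gg B^k$ points. A correct total count must therefore be organised over all such partitions, not over the binary quantity $m$. Every block must be counted affinely: binary blocks via Lemma \ref{le 2 dim count}, ternary blocks via Lemma \ref{lem:curves}, and larger blocks by induction.

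What your method does establish is the bound for points with no vanishing subsum at all.
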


\begin{remark}
    In particular, observe that for a fixed $n$, once $d > d_n$ we have that \[
     \#\{ x \in X_n(\mathbb{Q}) \setminus W: H(x) \leq B\} = o(B),
    \] demonstrating that the quasi-diagonal solutions dominate the count in a very strong manner. Similarly, another immediate consequence is that for any $\vep>0$ there exists a $d(\vep)$ such that $d> d(\vep)$ yields
    \[
    \mathcal{N}_{Q_{\vect{a}}}(B) \ll_\vep B^{m+\vep},
    \] where $m$ is defined as above.
\end{remark}

    The condition that $-a_ia_j \in \mathbb{Z}^{[d]}$ characterises when quasi-diagonal solutions coming from linear spaces within the hypersurface arise and, in particular, $m$ is the dimension of the largest $\mathbb{Q}$-linear space within the hypersurface. Note that the exponent $\phi$ varies significantly with $m$. In the generic case one expects $m=0$, hence the exponent is dominated by the $r$ sum, which decays as $d$ gets large and $n$ is fixed. Conversely, in the worst case scenario where the hypersurface contains $\frac{n-1}{2}$ dimensional planes, the first term in the exponent dominates and we recover the asymptotic bound from Salberger--Wooley \eqref{SW}. 
    
    In the circle method literature, these kinds of paucity results have been established for many diagonal systems of equations (see for instance \cite{paucityvmvt,paucitytriples,paucitypairs,paucityskinner}). 
    Two particularly striking examples are the work of Vaughan--Wooley~\cite{MR1438823} on Vinogradov systems and Wooley~\cite{wooleybrudernrobert} on the Br{\"u}dern--Robert system, where the authors produce bounds of the form $O_\vep(B^{\sqrt{2n+3}+\vep})$ and $O_\vep(B^{\sqrt{4n+5}-1+\vep})$, respectively, for the off-diagonal solutions. These go far beyond the square-root cancellation of the Salberger--Wooley result \eqref{SW} and show that the number of solutions is dominated by the asymptotic formula for the diagonal solutions of which one can elaborate many lower order terms before the off-diagonal solutions make any contribution. These results are very much in the same spirit as those of Theorem \ref{thm:main-counting} where the size of the quasi-diagonal solutions dominates, and for sufficiently large $d$ one can see that the lower order terms from these solutions (should one write out the asymptotic, of which we have no need for our later purposes) would also dominate the error from the non quasi-diagonal terms.

    One distinction in our approach is that it works in the regime where the degree is much larger than the number of variables. Another is that our bounds allow for any choice of coefficients whereas most previous works handle specific equations with coefficients in $\{\pm1\}$. These distinctions we have in common with the work of Salberger--Wooley and stem from the principal tool being the determinant method. 
    \begin{remark}
        The determinant method was first developed by Bombieri--Pila~\cite{BP} to show that there were few integral points on real curves, reflecting the fact that dilates of algebraic curves of high genus should have few points. One of the most important features of the method is that the bound produced is independent of the coefficients of the curve. The method was then adapted by Heath-Brown~\cite{HBannals} who developed a $p$-adic analogue and by Salberger~\cite{PLMS} who developed a global method. Building on the perspective of curves of high genus, one could view our results as a quantitative version, in the diagonal case, of the expectation that the rational points on hypersurfaces of general type should be restricted to very few subvarieties (a very weak formulation of the Bombieri--Lang conjecture).
    \end{remark}
    As demonstrated by the remark following Theorem \ref{thm:main-counting}, we are able to go far beyond square-root cancellation when avoiding the quasi-diagonal solutions in the spirit of the strongest known paucity results. In fact, by taking $d$ large enough, we can get arbitrarily strong bounds for non quasi-diagonal solutions. The major input that allows us to control the rational points on diagonal hypersurfaces outside of linear subspaces in such a strong manner is the fact that such points cannot lie on subvarieties of very small degree (c.f.\ Lemma \ref{lem:degreecontrol}).

\subsection{\label{subsec:Application} Nearest neighbor spacing
distribution}

Correlation statistics of a sequence allow us to obtain information about the gap distribution. To define this, we relabel the points $\{x_n\}_{n=1}^{N}$ so that the labels correspond to the position on the torus
$u_1^{(N)} \le u_2^{(N)} \le \cdots \le u_N^{(N)}$.
We define the cumulative gap distribution to be the limit (if it exists) 
\begin{align*}
    P(\cX_\alpha^d)(s) = \lim_{N\to \infty} P_N(\cX_{\alpha}^d)(s) : =  \frac{\#\{ i \le N \ : \ u_{i+1}^{(N)} - u_i^{(N)} < s/N\}}{N}.
\end{align*}
The scaling $s/N$ ensures that this is measuring the gaps on the level of the mean spacing. From Theorem \ref{thm:main corr} one can derive the following corollary which states that the gap distribution of $\cX_{\alpha}^d$ can be bounded by Taylor approximations of $1-e^{-x}$. Thus, the gap distribution is approximately Poissonian. 

    \begin{corollary}\label{cor gap}
    Let $K\geq 1$, and  $d>d_{2K}$ be integers. 
    For almost all $\alpha$, 
    we have 
\begin{equation}\label{eq gap distr convergence}
\sum_{1\le k\leq2K}(-1)^{k+1}\frac{s^{k}}{k!} \leq\liminf_{N\to\infty}  P_N(\cX_{\alpha}^d)(s)  \le\limsup_{N\to\infty}P_N(\cX_{\alpha}^d)(s) \leq\sum_{1\le k\leq2K-1}(-1)^{k+1}\frac{s^{k}}{k!}
\end{equation}
for all $s>0$. Consequently, 
$
\lim_{d\rightarrow \infty}
P(\cX_{\alpha}^d)(s) = 1-e^{-s}
$ for almost all $\alpha$.
\end{corollary}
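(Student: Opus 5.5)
We deduce Corollary~\ref{cor gap} from Theorem~\ref{thm:main corr} by the standard inclusion--exclusion (Bonferroni) argument that turns correlation functions into bounds on the gap distribution. Since $d>d_{2K}$ and $d_\ell=(2\ell)^{4\ell}$ is increasing in $\ell$, we have $d>d_\ell$ for every $3\le \ell\le 2K$. Taking the intersection of the full-measure sets from Theorem~\ref{thm:main corr} over these finitely many $\ell$, and adding Theorem~\ref{thm:RS} for $\ell=2$, we get a full-measure set of $\alpha$ for which $\cX_\alpha^d$ has Poissonian $\ell$-point correlations for all $2\le \ell\le 2K$ simultaneously. We fix such an $\alpha$.

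\textbf{Step 1: the counting functions.} For a point $x(i)$, $i\le N$, let $M_i(s)$ be the number of $j\ne i$, $j\le N$, with $0< x(j)-x(i)<s/N$ on the torus. Then the gap following $x(i)$ is less than $s/N$ exactly when $M_i(s)\ge1$. Ties $x(i)=x(j)$ for $i\neq j$ occur only for countably many $\alpha$, so we ignore them. We use the Bonferroni inequalities for the indicator $\mathbbm{1}_{M\ge1}=\sum_{k\ge1}(-1)^{k+1}\binom{M}{k}$. Truncating after an even number $2K$ of terms gives a lower bound, and after an odd number $2K-1$ of terms gives an upper bound. Summing over $i$ and dividing by $N$ gives
\begin{align*}
\frac1N\sum_{i\le N}\sum_{k=1}^{2K}(-1)^{k+1}\binom{M_i(s)}{k}\le P_N(\cX_\alpha^d)(s)\le \frac1N\sum_{i\le N}\sum_{k=1}^{2K-1}(-1)^{k+1}\binom{M_i(s)}{k}.
\end{align*}

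\textbf{Step 2: link to correlations.} Next we observe that $\frac1N\sum_i\binom{M_i(s)}{k}$ equals $\frac{1}{k!}$ times the $(k+1)$-point correlation sum $R^N_{k+1}$ evaluated at the indicator of the region $\{0<y_1<y_1+y_2<\dots\}$, with all $k$ points lying to the right of $x(i)$ within distance $s$. The division by $k!$ removes the ordering of the $k$ other points. The relevant region in difference coordinates is a simplex-like polytope of volume $s^k$ in $\R^k$; the $k!$ in the statement comes from counting unordered $k$-subsets. The indicator is not smooth, so we sandwich it between functions in $C_c^\infty(\R^{k})$ whose integrals differ from $s^k$ by at most $\vep$. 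Since $\alpha$ has Poissonian $(k+1)$-point correlations for $k+1\le 2K$, the correlation sums of both smooth approximants converge to their integrals. Letting $\vep\to0$ gives $\frac1N\sum_i\binom{M_i(s)}{k}\to s^k/k!$ for each $k\le 2K-1$.

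The case $k=2K$ in the lower bound needs $(2K+1)$-point correlations, which we do not have. We handle it by noting that the term $-\binom{M}{2K}$ enters the lower bound with a negative sign, so we only need to control $\limsup\frac1N\sum_i\binom{M_i}{2K}$ from above. We could either strengthen the hypothesis slightly or truncate differently. The cleaner option is to apply Bonferroni with $2K$ replaced by $2K-2$ for the lower bound. To keep the statement as written, though, I would note that $d>d_{2K}$ already gives $2K$-point correlations, and re-index so that the lower bound uses $\binom{M}{k}$ with $k\le 2K-1$ (equivalently, the correlations of order $k+1\le 2K$). Checking which index convention matches the stated sums is the main bookkeeping obstacle. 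The smoothing step is routine, since Poissonian convergence for $C_c^\infty$ functions extends to indicators of boxes and polytopes by monotone approximation.

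\textbf{Step 3: conclusion.} Taking $\liminf$ and $\limsup$ gives \eqref{eq gap distr convergence}. For the final claim, we let $K\to\infty$, which is allowed as $d\to\infty$ since $d>d_{2K}$ then holds for larger and larger $K$. Both Taylor truncations then converge to $1-e^{-s}$, and intersecting the countably many full-measure sets over $d$ gives $\lim_{d\to\infty}P(\cX_\alpha^d)(s)=1-e^{-s}$ for almost all $\alpha$, with the limit in $N$ understood through the $\liminf$ and $\limsup$ bounds.
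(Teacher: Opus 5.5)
Your route is the one the paper uses. Its proof simply invokes the Bonferroni/inclusion--exclusion relation between nearest-neighbour spacings and correlation functions (Kurlberg--Rudnick, Lemma 11; Technau--Yesha, Corollary 1.6), and your Steps 1--3 make that relation explicit. Several parts of your argument are fine: Step 1, the parity of the truncations, the use of Theorem~\ref{thm:RS} for $\ell=2$, the smoothing, the limit $d\to\infty$, and the upper bound. The upper bound only needs $\frac1N\sum_i\binom{M_i(s)}{k}\to s^k/k!$ for $k\le 2K-1$, i.e.\ correlations of order at most $2K$, which $d>d_{2K}$ provides. (In Step 2, count either ordered neighbours, giving a region of volume $s^k/k!$, or unordered ones, giving the cube $(0,s)^k$ and then dividing by $k!$, but not a mixture of the two.)

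The genuine gap is the lower bound, and your proposed fix does not work. For $M\ge 1$ one has $\mathbbm{1}_{M\ge1}-\sum_{k=1}^{J}(-1)^{k+1}\binom{M}{k}=(-1)^{J}\binom{M-1}{J}$. So a truncation is a lower bound exactly when $J$ is even, and no re-indexing changes that. If you only use $\binom{M}{k}$ with $k\le 2K-1$, the best lower bound available is the truncation at $J=2K-2$, namely $\sum_{k\le 2K-2}(-1)^{k+1}s^k/k!$, not the stated $\sum_{k\le 2K}$. The stated lower bound requires $\limsup_N\frac1N\sum_i\binom{M_i(s)}{2K}\le s^{2K}/(2K)!$. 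That is a $(2K+1)$-point correlation sum, and correlations of order $\le 2K$ cannot bound it: knowing the average of $\binom{M}{2K-1}$ gives no control on the extra factor $(M-2K+1)/(2K)$. Theorem~\ref{thm:main corr} with $d>d_{2K}$ does not supply the $(2K+1)$-point correlation, so your argument as written proves the stated lower bound only for $d>d_{2K+1}$.

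You need to settle this rather than call it bookkeeping. One option is to prove the corollary under $d>d_{2K+1}$; your Steps 1--3 then go through verbatim, and the conclusion $\lim_{d\to\infty}P(\cX_\alpha^d)(s)=1-e^{-s}$ is unaffected. The other is to prove only the weaker lower bound under $d>d_{2K}$. The paper's one-line proof defers exactly this index count to the cited references, so it gives you no extra mechanism for the $k=2K$ term.
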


\begin{proof}
    \eqref{eq gap distr convergence}
    is a direct consequence of
    Theorem \ref{thm:main corr} 
    and known relations between the gap distribution and correlation functions, see 
    \cite[Lemma 11 and (A.2)]{RudnickKurlberg1999}
    and compare
    \cite[Corollary 1.6]{TechnauYesha2020}.
\end{proof}

    \subsection{Further directions}

    The main novelty in this paper is the application of the determinant method to the counting problem arising in the correlations mod $1$ context and the stratified counting which allows us to control averages far more effectively. We have made some effort to tighten the screws in this argument but there are still some sources of loss which we anticipate could be improved. This would likely improve the range of possible degrees in Theorem \ref{thm:main corr}.

    Counting rational points on diagonal hypersurfaces appears in a variety of contexts. For instance, in the representation of a number as a sum of $d^{th}$ powers \cite{HB}. However, slightly less evidently, one can also use estimates like Theorem \ref{thm:main-counting} to count matrices of a given form whose entries come from some arithmetically interesting sets. For example, in \cite{BL} Blomer--Li use an estimate for the number of matrices with a given rank and entries of the form $x^d$ for $x<N$ to show that the $\ell$-point correlations between points on the real line given by diagonal equations in $k$ variables with power $d$ are almost surely Poissonian. The estimate for the number of matrices with fixed rank was then improved upon and generalized to more general arithmetically defined entries in recent work of Mohammadi--Ostafe--Shparlinski \cite{MOS}. Theorem \ref{thm:main-counting} could lead to some improvement in the ranges of both these results. However, na\"ively plugging in our bound does not yield a significant improvement since in both cases the bottleneck in the argument is elsewhere.

    In addition, these kinds of stratified counting theorems could be used in a variety of arithmetic contexts. Indeed, there are the obvious analogues of the various representation problems considered in Marmon~\cite{Marmon} and potential applications to lower order terms in the mean values of Weyl sums. Moreover, all our counting results should be extendable to global fields via the work of Paredes--Sasyk~\cite{globalfields}\footnote{We thank Matteo Verzobio for this observation.}.
    As far as future efforts go, it would be particularly interesting to see results of this kind for non-diagonal hypersurfaces.

    \subsection*{Notation}

    Throughout this paper we use typical Bachmann--Landau notation: for functions $f,g : X \to \R$ defined on some set $X$, we write $f\ll g$ (or $f=O(g)$) to denote that there exists a constant $C>0$ such that $|f(x)|< C |g(x)|$ for all $x\in X$. We write $f \asymp g$ if $f\ll g$ and $g\ll f$ and let $f=o(g)$ 
    mean that $\frac{f(x)}{g(x)} \to 0$. 
    Let $[N]:=\{1, \dots, N\}.$
    Given a Schwartz function $f: \R^m \to \R$, let $\wh{f}$ denotes the Fourier transform:
    \begin{align*}
      \wh{f}(\vect{k}) : = \int_{\R^m} f(\vect{x}) e(-\vect{x}\cdot \vect{k}) \mathrm{d}\vect{x}, \qquad \text{for } \vect{k} \in \R^m.
    \end{align*}
    Here, and throughout we let $e(x):= e^{2\pi i x}$. Finally, we use $\vep>0$ as a small constant which can change from line to line. For brevity of notation, we use the convention $\sqrt[1]{n}=n$. 

    \textbf{Plan of the paper:} We prove the counting theorem, Theorem \ref{thm:main-counting}, in Section \ref{s:counting}. Then we prove the Lebesgue almost everywhere statement, Theorem \ref{thm:main corr}, in Section \ref{s:correlations}. Then we prove the extension to fractal measures, Theorem \ref{thm Hausdorff correlation}, in Section \ref{s:fractal}. We have taken care to use notation that is coherent and remains close enough to the literature to avoid committing any sins. However, since the different sections are rather different in flavor, we will not import the notation between the different sections.

    \section{Proof of Theorem \ref{thm:main-counting}}
    \label{s:counting} 

    Throughout this section, suppose that $n \geq 3$. Fix $a_0, \ldots, a_n \in \Z_{\neq 0}$ and let $X_n \subset \mathbb{P}_{\mathbb{Q}}^n$ denote the projective hypersurface defined by the diagonal equation $Q_{\vect a}(\vect{x}) = a_0x_0^d + \ldots+a_n x_n^d =0$. A \emph{quasi-diagonal} subvariety on $X_n$ are those subsets of the rational points on $X_n$ satisfying a system of equations of the form
    \[
        \sum_{j\in P_1}a_jx_j^d = 0 =\sum_{j\in P_2}a_jx_j^d, 
    \]
    for some partition $P_1 \cup P_2 =\{0,1, \ldots, n\}$ into parts of size at least 2. The lines contained within such subvarieties are called the \emph{standard lines} and are the only lines on $X_n$ (c.f.\ \cite[Ex. 2.5.3]{Debarre}). 

    The following result of Salberger controls the degree of curves on $X_n$ and is the major geometric engine behind our results.
    \begin{theorem}[{\cite[Thm. 9.1]{PLMS}}]\label{thm:salcurves}
        Let $C$ be a closed integral curve on $X_n$ of degree $\delta$ which does not lie on any other diagonal hypersurface of degree $d$ in $\mathbb{P}^n$. Then
        \[
        \delta \geq 1-\frac{2}{n-1}+\frac{2d+2}{n(n-1)} = L(d,n).
        \]
    \end{theorem}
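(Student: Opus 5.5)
The plan is a Wronskian (Plücker-formula) argument on the normalization of $C$, combined with the elementary genus bound for a curve of degree $\delta$.

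\textbf{Setup.} Let $\nu:\widetilde C\to C$ be the normalization, with geometric genus $g$. Put $L=\nu^*\mathcal O(1)$, so $\deg L=\delta$, and let $s_i=\nu^*x_i\in H^0(\widetilde C,L)$. The sections $s_0^d,\dots,s_n^d\in H^0(L^{d})$ satisfy the relation $\sum a_i s_i^d=0$. The hypothesis that $C$ lies on no other diagonal hypersurface of degree $d$ says this is the only linear relation up to scaling. Hence $W=\mathrm{span}(s_i^d)$ has dimension exactly $n$. Since every $a_i\neq0$, any $n$ of the $s_i^d$ form a basis of $W$.

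\textbf{Wronskian and its degree.} Take the Wronskian $\mathcal W$ of a basis of $W$. It is a nonzero section of $L^{nd}\otimes K_{\widetilde C}^{n(n-1)/2}$, so it has degree
\[
nd\delta+n(n-1)(g-1).
\]
Removing the index $i$ changes the chosen basis only by a nonzero constant factor coming from the relation. So all the Wronskians $\mathcal W_i=W(s_0^d,\dots,\widehat{s_i^d},\dots,s_n^d)$ are proportional.

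\textbf{Divisibility.} In a local coordinate, every derivative of order at most $n-1$ of $s^d$ is divisible by $s^{d-n+1}$. Hence $\mathcal W_i$ is divisible by $\prod_{j\ne i}s_j^{\,d-n+1}$. At a point $P$, choose $i$ with $s_i(P)\neq0$; such an $i$ exists because the $x_j$ have no common zero. Then the order of $\mathcal W$ at $P$ is at least $(d-n+1)\sum_{j}\mathrm{ord}_P(s_j)$. Summing over $P$ gives
\[
(n+1)(d-n+1)\delta\le nd\delta+n(n-1)(g-1),
\]
that is, $(d-n^2+1)\delta\le n(n-1)(g-1)$.

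\textbf{Genus bound and conclusion.} The genus of a nondegenerate integral curve of degree $\delta$ satisfies $g\le(\delta-1)(\delta-2)/2$ (the plane-curve bound, via projection). This gives $g-1\le\delta(\delta-3)/2$. Dividing by $\delta>0$ yields $d-n^2+1\le n(n-1)(\delta-3)/2$. Rearranged, this is
\[
\delta\ge 3+\frac{2(d-n^2+1)}{n(n-1)}=\frac{n^2-3n+2+2d}{n(n-1)}=L(d,n).
\]

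\textbf{Main obstacle.} The delicate step is the divisibility at points where several $s_j$ vanish simultaneously, or where $\nu$ is not injective. The local argument above works on $\widetilde C$ pointwise, because it only uses one nonvanishing $s_i$ at each point, so this case should be fine. A second point to check is that the hypothesis really forces $\dim W=n$ rather than less. A further linear relation among the $s_i^d$ would exhibit $C$ on a second diagonal hypersurface, which is excluded. If $\mathcal W$ vanished identically, $W$ would be dependent in characteristic $0$, so $\mathcal W\neq0$.
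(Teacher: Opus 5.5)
The paper does not prove this statement itself; it quotes it from Salberger. Your argument is a correct and complete instance of the standard Wronskian (truncated Plücker) route to it: $\dim W=n$, the proportionality of the $\mathcal W_i$, the local divisibility using a coordinate that does not vanish at $P$, and the plane-curve bound $g\le(\delta-1)(\delta-2)/2$ together give exactly $L(d,n)$. The one point to state explicitly is that you work over $\bar{\mathbb{Q}}$ with $C$ geometrically integral, so that "no other diagonal hypersurface" holds over $\bar{\mathbb{Q}}$; this matters because the Wronskian only detects linear dependence over the full constant field of the function field.
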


    Salberger~\cite{PLMS} and Marmon~\cite{Marmon} showed that the only curves of small degree belonging to $X_n$ are the standard lines when $n=3$ and 4, respectively. We start by establishing a slightly weaker variant that holds for all $n$.

    \begin{lemma}\label{le no small degree curves}
      Let $C \subset X_n$ be a closed integral curve of degree less than $L(d,n)$.
    Then $C$ is contained within a quasi-diagonal subvariety.
    \end{lemma}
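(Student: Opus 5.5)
We argue by induction on $n$, with Theorem \ref{thm:salcurves} as the engine. Let $C\subset X_n$ be a closed integral curve of degree $\delta<L(d,n)$. By Theorem \ref{thm:salcurves}, $C$ must lie on some other diagonal hypersurface of degree $d$, say $\sum_i b_i x_i^d=0$ with $\mathbf{b}$ not proportional to $\mathbf{a}$.

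Taking linear combinations of the two equations, we can eliminate one index. Choose $\lambda$ so that $\mathbf{c}=\mathbf{b}-\lambda\mathbf{a}$ has $c_{i_0}=0$ for some $i_0$. Then $\mathbf{c}\neq 0$, and $C$ lies on $\sum_i c_i x_i^d=0$, which involves fewer variables. More usefully, the pencil spanned by $\mathbf{a}$ and $\mathbf{b}$ splits the index set according to the ratios $b_i/a_i$. Let $P_1$ be the set of indices where $b_i/a_i$ takes one particular value $\rho$, and $P_2$ its complement, which is nonempty. Then $C$ satisfies $\sum_{P_2}(b_i-\rho a_i)x_i^d=0$, and subtracting this from $Q_{\mathbf a}$ also gives $\sum_{P_1} a_i x_i^d=0$ when $|P_2|$ is small. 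The cleanest route is a sub-induction: $C$ lies in the intersection $X_n\cap\{\sum_{P_2}c_ix_i^d=0\}$ with all $c_i\neq0$ on $P_2$. Writing $\mathbf{a}=\mathbf{a}|_{P_1}+\mathbf{a}|_{P_2}$, the two equations together are equivalent to the system
\[
\sum_{P_1}a_ix_i^d=-\sum_{P_2}a_ix_i^d,\qquad \sum_{P_2}c_ix_i^d=0.
\]

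Next we deal with degenerate parts. If some part has size $1$, say $P_2=\{j\}$, then $C\subset\{x_j=0\}$. In that case $C$ lies on the diagonal hypersurface $X_{n-1}$ in the remaining variables, and $L(d,n-1)\ge L(d,n)$ (check: $L$ is decreasing in $n$ for $d>n$). Induction on $n$ then places $C$ in a quasi-diagonal subvariety of $X_{n-1}$. Adjoining $j$ to one of its parts gives a quasi-diagonal subvariety of $X_n$, provided we allow the coordinate hyperplane situation; this needs the convention that $x_j=0$ is absorbed into the part. The base case $n=3$ is Salberger's result quoted above.

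The genuinely nontrivial step is producing the exact partition equations $\sum_{P_1}a_ix_i^d=0=\sum_{P_2}a_ix_i^d$. The plan is to iterate: project $C$ onto the coordinates in $P_2$, so that its image lies on the diagonal hypersurface $\sum_{P_2}c_ix_i^d=0$ in fewer variables with degree at most $\delta$. Then apply Theorem \ref{thm:salcurves} again in lower dimension, using monotonicity of $L$, to repeatedly refine the partition until, on each block, the restrictions of all diagonal forms vanishing on $C$ are proportional. At that point the span of the diagonal forms vanishing on $C$ is generated by block forms, and $\mathbf{a}$ lies in that span. A second application of Theorem \ref{thm:salcurves} on each block should show that each block form is individually a multiple of $\mathbf{a}$ restricted there. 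This yields the quasi-diagonal system.

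I expect the main obstacle to be exactly this last point: controlling degree under projection, and ensuring the blocks with at least two indices are compatible with $\mathbf a$ rather than only with some auxiliary $\mathbf c$. If it fails, I would fall back on the weaker conclusion that $C$ lies on $\sum_{P}a_ix_i^d=0$ for a proper subset $P$ of size at least $2$. That is the form actually used later when excluding $W$.
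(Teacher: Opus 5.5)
Your setup (a second diagonal form from Theorem~\ref{thm:salcurves}, eliminating one coefficient, and handling $x_j\equiv 0$ by induction) matches the paper. However, the step you flag as the obstacle is a genuine gap, and the mechanism you propose for it does not work.

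Projecting $C$ onto the support $S$ of an auxiliary form $\mathbf c$, and feeding the image back into Theorem~\ref{thm:salcurves} or the inductive hypothesis, only ever produces more auxiliary forms vanishing on $C$. No step of your iteration returns an equation $\sum_{i\in P}a_ix_i^d=0$ with the original coefficients. Theorem~\ref{thm:salcurves} only says that some other diagonal hypersurface contains the curve, so it cannot supply the ``second application on each block''.

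You also never treat the case where $\pi_S(C)$ is a point, where Theorem~\ref{thm:salcurves} says nothing. This is not a corner case but the typical outcome. If $S$ is a minimal support (which is where your refinement is heading), a curve image is impossible:
for $|S|\ge 4$ the inductive hypothesis would give a proper vanishing subsum of $\mathbf c$;
for $|S|=3$ the image would be an entire Fermat curve, of degree $d>L(d,n)$;
for $|S|=2$ the hypersurface is finite.

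Consequently your intended terminal state is false in general. Take the line $(s:s:t:t:t:t)$ on $x_0^d-x_1^d+x_2^d+x_3^d+x_4^d-3x_5^d=0$. The diagonal forms vanishing on it are exactly those with $v_0+v_1=0=v_2+v_3+v_4+v_5$. On every block of size at least two meeting $\{2,3,4,5\}$, their restrictions span a space of dimension at least two. A smaller error: subtracting $\sum_{P_2}(b_i-\rho a_i)x_i^d$ from $Q_{\mathbf a}$ gives $\sum_{P_1}a_ix_i^d=0$ only when $|P_2|=1$ or the ratios $b_i/a_i$ take just two values.

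The missing idea is the paper's fibre argument, which converts the point case back into information about $\mathbf a$. Suppose $\pi_S(C)$ is a point $\mathbf y$. Then $C$ lies in the linear space $\{\mathbf x:\ \mathbf x_S\in\langle \mathbf y\rangle\}\cong\mathbb P^{n+1-|S|}$, with coordinates $(t,\mathbf x_{S^c})$. On this space $Q_{\mathbf a}$ restricts to the diagonal form $c\,t^d+\sum_{i\notin S}a_ix_i^d$, where $c=\sum_{i\in S}a_iy_i^d$.
\begin{itemize}
\item If $c=0$, you directly get the vanishing subsum $\sum_{i\in S}a_ix_i^d$.
\item If $c\neq 0$, $C$ is a curve of the same degree on a diagonal hypersurface in fewer variables. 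The inductive hypothesis then partitions $\{t\}\cup S^c$, and the part avoiding $t$ is a vanishing subsum of $\mathbf a$ itself; its complement then vanishes too.
\end{itemize}

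Your block picture can also be salvaged, but with the correct terminal state. Decompose the space of vanishing diagonal forms along the connected components of its minimal supports. $C$ collapses to a point on each component of size at least two. Since $C$ is not a point, there are at least two components, and $\mathbf a$ lying in the resulting direct sum forces each $\sum_{i\in B}a_ix_i^d$ to vanish on $C$.

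Finally, your ``fallback'' is not weaker than the lemma. Once some proper subsum $\sum_{P}a_ix_i^d$ vanishes on $C$, so does the complementary one. That fallback is therefore the entire content of the statement, and the proposal gives no argument for it.
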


\begin{proof}
    We proceed to show, by strong induction on $n\ge 3$, that given a diagonal hypersurface $X_n\subset \mathbb{P}^n$, the degree of any such curve $C$, not contained in a quasi-diagonal subvariety, is bounded by $L(d,n)$ uniformly in the coefficients. The base case when $n=3$ corresponds to a result of Salberger~\cite[Thm. 9.4]{PLMS}. Let us then suppose that the conclusion holds for all natural numbers between 3 and $n-1$ and let $C \subset X_n$ be an integral curve of degree less than $L(d,n)$.

    Applying Theorem \ref{thm:salcurves} we see that $C$ must lie on another diagonal hypersurface. Let us call this hypersurface $Y$ and suppose that it is cut out by the vanishing of the form $G(\vect x) = \sum_{i=0}^n b_i x_i^d$. Recall that we denote by $Q_{\vect a}(\vect x)$ the form whose vanishing defines $X_n$. 
    Then there exists a $\Z$-linear combination of 
    $Q_{\vect a}$ and $G$ such that at least one 
    of the coefficients is 0. 
    Hence without loss of generality, we may assume that $b_n=0$.
    Now we project 
    $\mathbb{P}^n \rightarrow \mathbb{P}^{n-1}$ onto the first $n$ coordinates. Let $Z$ denote the image of $Y$ under this projection, and $D$ the image of $C$. Note that $Z$ is again a diagonal hypersurface, now inside $\mathbb{P}^{n-1}$. The image $D$ must be either a point or an irreducible curve. However, if it were a point then $C$ would be the line passing through the point $[0:\ldots:0:1]$ which cannot be since $a_n\neq 0$. Therefore $D$ is a curve and since it has degree smaller than $L(d,n) \leq L(d,n-1)$, we conclude by the inductive hypothesis that there exists a subsum $\sum_{i\in I} b_i x_i^d $ vanishing on $C$.

    We would like to draw a similar conclusion but for a subsum involving the $a_i$ not the $b_i$. We project down now from $\mathbb{P}^n$ to $\mathbb{P}^{ \#I-1}$ onto the coordinates lying in $I$. Let $W$ denote the subvariety given by the subsum found above, namely $\sum_{i \in I} b_i x_i^d=0$. Either the image of $C$ under this projection is a point or a curve. If it is a curve then as before we may find another diagonal form in $\mathbb{P}^{\#I -1}$ which also vanishes on the image of $C$. By the exact argument just used, there is a subset $I' \subset I$ and a form $\sum_{i \in I'} c_i x_i^d$ vanishing on the image of $C$. We can therefore project again onto $\mathbb{P}^{\#I'-1}$.

    We repeat this process, reducing the dimension of the ambient projective space by projection, until one of two things happens. Either our curve gets projected onto a point or we eventually project onto a projective space of dimension $\leq 4$. In the latter case, we know by the work of Salberger and Marmon, that the image of $C$ must be a line. In particular, this image must be a line on the projection of $X_n$, which remains a Fermat hypersurface. Since all lines are standard lines, we conclude that there must be a vanishing subsum.

    Otherwise, this procedure has produced an integer $r$ and a projection $\mathbb{P}^n \xrightarrow{\pi} \mathbb{P}^r$ such that the image of $C$ under this projection is a point. Let us suppose, without loss of generality, that point is $[y_0:\ldots:y_{r-1}:1]$. The fiber under $\pi$ above this point is a linear space $L$ inside $\mathbb{P}^n$. Points on the intersection $L \cap X_n$ must satisfy an equation of the shape
    \begin{equation}\label{eq:neweq}
    cx_r^d + a_{r+1}x_{r+1}^d + \ldots + a_nx_n^d =0,
    \end{equation}
    where
    \[
    c = a_0y_0^d + \ldots a_{r-1}y_{r-1}^d + a_r
    \]
    If $c=0$ then we are done. Otherwise, we may apply the inductive hypothesis to \eqref{eq:neweq}. This will produce a subset $I$ of indices whose subsum vanishes. If $r \not \in I$ then we have found a vanishing subsum of the original equation. Otherwise the sum of the indices not in $I$ must also sum to 0 and thus we have the claim.
\end{proof}

We now extend this conclusion to subvarieties of greater dimension.

\begin{lemma}\label{lem:degreecontrol}
    Let $Y \subset X_n$ be a subvariety of positive codimension and of degree less than $L(d,n)$. Then there exists a subset $I \subset \{0,\ldots,  n\}$ such that $\#I \geq 2$ and  such that the form $\sum_{i \in I} a_i x_i^d$ vanishes on $Y$.
\end{lemma}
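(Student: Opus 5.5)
We reduce to the curve case, Lemma \ref{le no small degree curves}, by slicing $Y$ with generic linear spaces and then passing from the slices back to $Y$. We may assume $Y$ is irreducible: the degree of each irreducible component is at most $\deg Y < L(d,n)$, so it suffices to treat components. Granting the irreducible case, for a reducible $Y$ we would still need a single subsum vanishing on all components simultaneously. That is not automatic, so we interpret the lemma componentwise, which is how it is used later, and note that this point needs care. If $\dim Y=0$, then $Y$ is a point. A point of degree $1$ is rational and is handled in the counting by $W$, so we assume $\dim Y = k\ge 1$.

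\textbf{Step 1 (slicing).} Intersect $Y$ with a general linear subspace $\Lambda\subset\mathbb{P}^n$ of codimension $k-1$. By Bertini, $C=Y\cap\Lambda$ is an integral curve for general $\Lambda$ (since $k\ge 2$ in this case, Bertini's irreducibility theorem applies), and $\deg C=\deg Y<L(d,n)$. Then $C\subset X_n$ is a closed integral curve of degree less than $L(d,n)$. By Lemma \ref{le no small degree curves}, $C$ lies in a quasi-diagonal subvariety. In particular there is a subset $I_\Lambda$ with $\#I_\Lambda\ge 2$ such that $\sum_{i\in I_\Lambda}a_ix_i^d$ vanishes on $C$. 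For $k=1$ this is already the claim, with $C=Y$.

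\textbf{Step 2 (uniformity in $\Lambda$).} There are only finitely many subsets $I$ of $\{0,\dots,n\}$. The parameter space of codimension-$(k-1)$ linear spaces is an irreducible Grassmannian $G$. For each $I$, the set $G_I=\{\Lambda\in G: \sum_{i\in I}a_ix_i^d \text{ vanishes on } Y\cap\Lambda\}$ is closed. One way to see this is via the incidence correspondence $\{(\Lambda,y): y\in Y\cap\Lambda\}$ and properness of its projection to $G$: the locus where the form vanishes identically on the fibre is closed. By Step 1, the finitely many sets $G_I$ cover a dense open subset of $G$. Since $G$ is irreducible, some single $G_I$ contains a dense open set, and hence $G_I=G$. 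So a fixed subsum $F_I=\sum_{i\in I}a_ix_i^d$ vanishes on $Y\cap\Lambda$ for general $\Lambda$.

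\textbf{Step 3 (lifting).} Every point $y\in Y$ lies on $Y\cap\Lambda$ for $\Lambda$ ranging over a nonempty family of general linear spaces through $y$. More simply, the union of the curves $Y\cap\Lambda$ over a dense open set of $\Lambda$ is dense in $Y$. Since $F_I$ vanishes on a dense subset of $Y$ and $Y$ is closed, $F_I$ vanishes on $Y$, as required.

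The main obstacle is Step 2, namely choosing the subset $I$ independently of the slice. The finiteness-plus-irreducibility argument should handle it. The delicate points are checking that the $G_I$ are genuinely closed (or at least constructible), and that the Bertini hypotheses hold: we need $Y\cap\Lambda$ integral rather than merely of the right degree. Integrality in Bertini requires $\dim Y\ge 2$ for the slice to remain irreducible, which is exactly the range $k\ge2$ where slicing is used.
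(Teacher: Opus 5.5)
Your argument is correct and is essentially the one the paper uses, namely the proof of \cite[Lemma 4.2]{SW} with Lemma \ref{le no small degree curves} substituted for the curve lemma there: cut an integral $Y$ down to integral curves of the same degree by general linear sections, apply the curve lemma, and use the finiteness of the possible index sets $I$ to get a single subsum vanishing on a dense subset of $Y$. Two small repairs are needed. First, the sets $G_I$ need not be closed when $\dim(Y\cap\Lambda)$ jumps, because the complement of $G_I$ is the image of an open set, which properness does not make open; but Chevalley's theorem makes $G_I$ constructible, which already gives a single $G_I$ containing a dense open set, and that is all your Step 3 uses. Second, the case $\dim Y=0$ should be excluded from the statement rather than ``handled by $W$'': a general point of $X_n$ lies on no proper vanishing subsum, so the lemma is only meaningful (and only used) for positive-dimensional integral $Y$.
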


\begin{proof}
     The proof follows that of \cite[Lemma 4.2]{SW}, however, in place of Lemma 4.1 therein we may apply our Lemma \ref{le no small degree curves}.
\end{proof}

With this result in hand we may now apply the determinant method. 
\begin{lemma}\label{le number of aux surfaces}
There is a collection of 
$$
O_{d,n,\vep}\left(B^{\sum_{r=3}^{n-1}\frac{r+1}{\sqrt[r]{L(d,n)}}+\vep}\right)
$$ integral surfaces of degree $O_{d,n,\vep}(1)$ such that every point of $X_n$ of height at most $B$ lies in one of these subvarieties. 
\end{lemma}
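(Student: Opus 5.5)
We will follow the iterative dimension-reduction scheme of Salberger--Wooley \cite[\S 4]{SW}, which goes back to Salberger's global determinant method \cite{PLMS}. In that scheme the degree bound for subvarieties enters at each step, and here we feed in Lemma \ref{lem:degreecontrol} in place of their bound. The basic tool is Salberger's global determinant method for projective varieties. For an integral variety $V\subset\mathbb{P}^n$ of dimension $r$ and degree $\delta$, the rational points of height at most $B$ on $V$ can be covered by $O_{d,n,\vep}(B^{(r+1)/(r\,\delta^{1/r})+\vep})$ hypersurfaces of degree $O_{d,n,\vep}(1)$ not containing $V$. More precisely, the version for hypersurfaces gives exponent $\frac{r+1}{\delta^{1/r}}$ up to $\vep$, as in \cite{SW}. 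We apply this repeatedly, descending from dimension $n-1$ down to dimension $2$.

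\textbf{Step 1 (start).} Take $V=X_n$, of dimension $n-1$ and degree $d\ge L(d,n)$. The determinant method produces $O(B^{\frac{n}{d^{1/(n-1)}}+\vep})$ auxiliary hypersurfaces of bounded degree. Each one meets $X_n$ in a union of $O(1)$ integral components of dimension $n-2$. Since $d\ge L(d,n)$, this is at most $B^{\frac{n}{L(d,n)^{1/(n-1)}}+\vep}$.

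\textbf{Step 2 (induction on dimension).} Suppose we have a collection of integral subvarieties $V\subset X_n$ of dimension $r\ge 3$, with $O(1)$ degree, whose number is $O(B^{\sum_{s=r+1}^{n-1}\frac{s+1}{L^{1/s}}+\vep})$ and which cover all points of height at most $B$. Here $L=L(d,n)$. We split the varieties $V$ into two cases according to their degree.

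If $\deg V\ge L$, we apply the determinant method to $V$. This covers its points by $O(B^{\frac{r+1}{L^{1/r}}+\vep})$ hypersurfaces of bounded degree not containing $V$. Intersecting these with $V$ gives $O(1)$ integral components of dimension $r-1$ and bounded degree, and multiplying the counts builds up the exponent sum.

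If $\deg V<L$, Lemma \ref{lem:degreecontrol} gives a subset $I$ with $\#I\ge2$ such that $\sum_{i\in I}a_ix_i^d$ vanishes on $V$. Then $V$ lies in the intersection of $X_n$ with this diagonal hypersurface, which is a quasi-diagonal subvariety. There are only $O_n(1)$ such subvarieties. If needed, we also intersect $V$ with a further bounded-degree hypersurface to keep the dimension dropping; this is harmless since only $O(1)$ varieties arise this way. Either way the number of pieces grows by at most the factor $B^{\frac{r+1}{L^{1/r}}+\vep}$. The $B^\vep$ losses accumulate over $O(n)$ steps, which is acceptable.

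\textbf{Step 3 (termination and main obstacle).} We stop once dimension $2$ is reached. The total count is then $B^{\sum_{r=3}^{n-1}\frac{r+1}{L^{1/r}}+\vep}$, and every variety is a surface of degree $O_{d,n,\vep}(1)$. These surfaces are reducible only into $O(1)$ integral components.

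The delicate point is the uniformity of the determinant-method bound. We need the version for subvarieties of arbitrary codimension, with constants depending only on $n$, $\vep$ and the degree. We also need the degrees of all auxiliary varieties to stay $O_{d,n,\vep}(1)$ through all the iterations; this is where Bezout's theorem controls the degrees of the intersections. In addition, the low-degree branch must genuinely reduce dimension, or else be absorbed into the $O(1)$ quasi-diagonal pieces. We would import the precise form from \cite[Thm. 0.1 / Cor. 5.6]{PLMS} as used in \cite[Lemma 4.3]{SW}. The only change to their argument is replacing the threshold $\deg\ge 2$ (no lines) with $\deg\ge L(d,n)$, which Lemma \ref{lem:degreecontrol} supplies.
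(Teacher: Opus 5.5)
Your overall route is the paper's: iterate the determinant method from dimension $n-1$ down to $2$ as in \cite[Lemma 3.4]{SW}, using Lemma \ref{lem:degreecontrol} so that $\deg V\ge L(d,n)$. Your high-degree branch and exponent bookkeeping are fine. (Your first formulation, with exponent $(r+1)/(r\delta^{1/r})$ for the number of bounded-degree hypersurfaces, confuses that number with the degree of a single auxiliary form. The exponent $(r+1)/\delta^{1/r}$ you actually use is the correct one.)

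The gap is the low-degree branch. When $\deg V<L(d,n)$ you cannot cut $V$ by ``a further bounded-degree hypersurface'' at a cost of $O(1)$ pieces. A hypersurface not containing $V$ catches only part of the rational points of $V$, and covering all those of height at most $B$ costs a power of $B$ that $L(d,n)$ does not control. For instance, a linear $V\cong\mathbb{P}^r$ has $\asymp B^{r+1}$ points of height at most $B$, while each bounded-degree hypersurface section carries only $O(B^{r})$ of them, so $\gg B$ pieces are needed. Nor can $V$ be absorbed into the $O_n(1)$ quasi-diagonal subvarieties: these have dimension $n-2$ rather than being surfaces, and may themselves carry many points. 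So your claim that ``either way'' the number of pieces grows by at most $B^{\frac{r+1}{L^{1/r}}+\vep}$ is unjustified.

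In fact this branch cannot be repaired while covering \emph{every} point. Take $n\ge 7$ odd and $\mathbf{a}=(1,-1,\dots,1,-1)$. Then $X_n$ contains the $\mathbb{Q}$-linear space $x_0=x_1,\ x_2=x_3,\ \dots$ of dimension $(n-1)/2\ge 3$, with $\asymp B^{(n+1)/2}$ points of height at most $B$. An integral surface of bounded degree contains only $O(B^3)$ of them, so $\gg B^{(n+1)/2-3}\ge B$ surfaces are needed. Yet $\sum_{r=3}^{n-1}(r+1)L(d,n)^{-1/r}\to 0$ as $d\to\infty$. The lemma must therefore be read, as it is used in Corollary \ref{cor:nosubsum}, as covering only the points with no vanishing subsum.

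With that reading the repair is to discard each $V$ with $\deg V<L(d,n)$ outright. By Lemma \ref{lem:degreecontrol}, every rational point on such a $V$ satisfies $\sum_{i\in I}a_ix_i^d=0$, and hence the complementary subsum vanishes too. Such points are counted separately in Theorem \ref{thm main counting}. Only your high-degree branch then remains, and it gives exactly the stated exponent.
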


Our result is based on \cite[Lemma 3.4]{SW}, however in that result the authors only cover the points of bounded height with subvarieties of codimension $(n-2)/2$. We are able to go deeper because Lemma \ref{lem:degreecontrol} affords us control over the degree of \emph{all} subvarieties which avoid vanishing subsums. 

\begin{proof}
    This proof is exactly like that of \cite[Lemma 3.4]{SW}, inductively applying \cite[Lemma 3.3]{SW}.
\end{proof}

Now that the points have been restricted to surfaces, we may apply the following result of Salberger. Note that the below is not how the result is stated in \cite{PLMS} but the same proof provides the subsequent result; see also Verzobio 
\cite[Lemma 2.2]{verzobio2025}
for a detailed discussion.
\begin{theorem}[{\cite[Thm. 6.1]{PLMS}}]\label{lem:matteo}
    Let $\vep >0$ and let $S \subset \mathbb{P}_\mathbb{Q}^n$ be a geometrically integral surface of degree $d$. Then the number of rational points on $S$ of height at most $B$ which do not lie on any irreducible curves of degree at most $e-1$ is bounded by
    \[
    O_{n,d,e,\vep}\left(
    B^{\frac{3}{\sqrt{d}}+\vep} + B^{\frac{3}{2\sqrt{d}}+\frac{2}{e}+\vep}\right).
    \]
\end{theorem}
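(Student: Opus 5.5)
The plan is to follow Salberger's global determinant method in three stages: reduce to a surface in $\mathbb{P}^3$, cut it by a single auxiliary surface of controlled degree, and then count points on the resulting curves with a uniform curve bound. The first term $B^{3/\sqrt d}$ will come from the square of the auxiliary degree, and the second from curve components of degree at least $e$.

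\textbf{Reduction to $\mathbb{P}^3$.} Take a generic linear projection $\pi:\mathbb{P}^n\dashrightarrow\mathbb{P}^3$ defined over $\mathbb{Z}$ with coefficients of size $O_{n,d}(1)$, chosen so that $\pi|_S$ is birational onto its image $S'$. Then $S'$ is a geometrically integral hypersurface of degree $d$, and heights grow by at most an $O(1)$ factor. Fibres over rational points of $S'$ contain $O_d(1)$ points away from a proper closed exceptional subset, which is a union of $O_d(1)$ curves. Irreducible curves of degree $\le e-1$ on $S$ map to curves of degree $\le e-1$. The exceptional curves either have degree $\le e-1$, and are then excluded, or are handled by the curve bound below. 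So it suffices to prove the claim for $S=\{F=0\}\subset\mathbb{P}^3$ with $F$ absolutely irreducible of degree $d$.

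\textbf{Auxiliary surface.} Here I would invoke the global determinant method (Salberger's version, or Heath-Brown's $p$-adic version with a product of primes). It produces a form $G$ not divisible by $F$ that vanishes at every rational point of $S$ of height $\le B$, with
\[
\deg G\ll_{d,\vep} B^{\frac{3}{2\sqrt d}+\vep}.
\]
This is the step I expect to be the main obstacle. One must compute the Hilbert function of $\mathbb{Q}[x_0,\dots,x_3]/(F)$ together with the $p$-adic (or global) vanishing multiplicities of the interpolation determinant, and then optimise the auxiliary degree to reach exactly the exponent $\frac{3}{2\sqrt d}$. I would copy Salberger's computation in \cite{PLMS} (or Verzobio's account) rather than redo it. Bezout then shows that $S\cap\{G=0\}$ is a curve of total degree $\le d\deg G$, with irreducible components $C_1,\dots,C_M$ of degrees $\delta_i$ satisfying $\sum_i\delta_i\le d\deg G$.

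\textbf{Counting on the components.} Discard every $C_i$ with $\delta_i\le e-1$. On each remaining component, use a uniform Bombieri--Pila/Walsh type bound for integral curves of degree $\delta$ in projective space, namely $\ll \delta^{2}B^{2/\delta+\vep}$ points of height $\le B$, uniformly in the coefficients (Walsh, or Castryck--Cluckers--Dittmann--Nguyen with a polynomial dependence on $\delta$). Split the sum according to the size of $\delta_i$.
\begin{itemize}
\item Components with $\delta_i\ge \log B$ have $B^{2/\delta_i}\ll1$, so they contribute
\[
\ll \sum_i\delta_i^2\le\Bigl(\sum_i\delta_i\Bigr)^2\ll(\deg G)^2\ll B^{\frac{3}{\sqrt d}+\vep}.
\]
\item Components with $e\le\delta_i<\log B$ have $\delta_i^2\ll B^{\vep}$ and $B^{2/\delta_i}\le B^{2/e}$. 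There are at most $d\deg G$ of them, so they contribute
\[
\ll \deg G\cdot B^{2/e+\vep}\ll B^{\frac{3}{2\sqrt d}+\frac{2}{e}+\vep}.
\]
\end{itemize}
Adding the two ranges gives the stated bound, with all implied constants depending only on $n,d,e,\vep$.
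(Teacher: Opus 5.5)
Your architecture is the route the paper defers to: Salberger's global determinant method, as in his Thm.~6.1 and Verzobio's Lemma~2.2. That means a single auxiliary form $G$ with $\deg G\ll B^{\frac{3}{2\sqrt d}+\varepsilon}$, B\'ezout to get a curve of total degree $\le d\deg G$, and a count over its components split at $\delta\approx\log B$. The bookkeeping of the two ranges is right.

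However, the curve bound as you cite it does not deliver the first term. That term rests entirely on the dependence on $\delta$ being \emph{quadratic}. If you only have $N(C,B)\ll\delta^{c}B^{2/\delta+\varepsilon}$, the range $\delta_i\ge\log B$ contributes up to $(d\deg G)^{c}$. A single component of degree $\asymp d\deg G$ shows that no better summation can recover a smaller power. Walsh's bound does not come with quadratic dependence on $\delta$. Castryck--Cluckers--Dittmann--Nguyen give $\delta^{4}B^{2/\delta}$, which would only produce $B^{6/\sqrt d}$ in place of $B^{3/\sqrt d}$. What you need is the Binyamini--Cluckers--Novikov bound $N(C,B)\ll\delta^{2}B^{2/\delta}(\log B)^{O(1)}$, which is optimal in $\delta$. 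It is stated for plane curves and passes to curves in $\mathbb{P}^3$ via a generic projection of small height. With that input the argument closes. The range $e\le\delta_i<\log B$ is fine with any polynomial dependence on $\delta$.

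The reduction to $\mathbb{P}^3$ is unnecessary. The global determinant method and B\'ezout both work for $S\subset\mathbb{P}^n$ directly, with the exponent $\frac{3}{2\sqrt d}$ depending only on $\dim S$ and $\deg S$. As written, the reduction also argues in the wrong direction. You need the converse: if $\pi(x)$ lies on a curve $C'\subset S'$ of degree $\le e-1$, then $x$ lies on a curve of $S$ of degree $\le e-1$. This holds for $x$ outside the non-injectivity locus $E$ of $\pi|_S$. For such $x$, the component of $\pi^{-1}(C')\cap S$ through $x$ is not contained in $E$, so it maps birationally onto $C'$ and has degree $\deg C'$. So $E$, a union of $O_{n,d}(1)$ curves of bounded degree, is the exceptional set to remove and treat with the curve bound. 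Fibre size is not the issue, since every fibre of the finite map $\pi|_S$ has at most $d$ points.
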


As a consequence, we are able to count points which do not satisfy $\sum_{i \in I} a_i x_i^d =0$ for any subset $I \subset\{0, \ldots, n\}$ for which $2 \leq \#I \leq n-1$. We say that for such points, there is \emph{no vanishing subsum}.
\begin{corollary}\label{cor:nosubsum}
    The number of points of height at most $B$ on $X_n$ for which there does not exist a vanishing subsum is \begin{align}\label{bjeb}
    O_{d,n,\vep}\left(B^{\sum_{r=3}^{n-1}\frac{r+1}{\sqrt[r]{L(d,n)}}}\left(B^{\frac{3}{\sqrt{L(d,n)}}+\vep}+B^{\frac{3}{2 \sqrt{L(d,n)}}+ \frac 2{L(d,n)} + \vep}\right)\right).
    \end{align}
In particular, this bound is independent of the choice of coefficients $\vect{a}$. Moreover, \eqref{bjeb} can be further bounded by
\begin{align}\label{bjeby}
    O_{d,n,\vep}\left(B^{\sum_{r=1}^{n-1}\frac{r+1}{\sqrt[r]{L(d,n)}} +\vep}\right).
    \end{align}
\end{corollary}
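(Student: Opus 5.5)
The plan is to combine Lemma \ref{le number of aux surfaces} with Theorem \ref{lem:matteo}, using Lemma \ref{lem:degreecontrol} to discard the curves that Theorem \ref{lem:matteo} leaves uncounted.

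\textbf{Step 1: cover by surfaces.} By Lemma \ref{le number of aux surfaces}, the points of $X_n$ of height at most $B$ lie on a collection of
\[
O\bigl(B^{\sum_{r=3}^{n-1}\frac{r+1}{\sqrt[r]{L(d,n)}}+\vep}\bigr)
\]
integral surfaces $S$, each of degree $O_{d,n,\vep}(1)$. We may intersect each $S$ with $X_n$ and keep only the components lying in $X_n$. So we may assume every $S \subset X_n$.

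\textbf{Step 2: points on a single surface.} Fix one such surface $S$ and consider the points on it with no vanishing subsum. There are three cases.
\begin{itemize}
\item If $S$ is not geometrically integral, its rational points lie on a curve of bounded degree. These are handled by the curve bound (Bombieri--Pila/Salberger), or they fall into the case below.
\item If $\deg S < L(d,n)$, then Lemma \ref{lem:degreecontrol} shows that some subsum $\sum_{i\in I}a_ix_i^d$ with $\#I\ge 2$ vanishes on all of $S$. If $\#I\le n-1$, every point of $S$ has a vanishing subsum, so $S$ contributes nothing. If $\#I\ge n$, the complementary subsum has at most one term, and that term is forced to vanish. Such points have a zero coordinate, which lands us in a lower-dimensional diagonal equation; I would handle these by the same argument in fewer variables, or else treat them as a degenerate subsum.
\item If $\deg S \ge L(d,n)$, apply Theorem \ref{lem:matteo} with $e=\lceil L(d,n)\rceil$. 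Any point lying on an irreducible curve of degree at most $e-1<L(d,n)$ lies, by Lemma \ref{le no small degree curves}, in a quasi-diagonal subvariety, so it has a vanishing subsum and is excluded. The remaining points number
\[
O\bigl(B^{3/\sqrt{L}+\vep}+B^{3/(2\sqrt L)+2/L+\vep}\bigr),
\]
using $\deg S\ge L$ and $e\ge L$. The implied constant depends on $\deg S$, but that degree is bounded.
\end{itemize}
Multiplying the per-surface bound by the number of surfaces from Step 1 gives \eqref{bjeb}. The bound is uniform in $\vect a$, because each ingredient is uniform in $\vect a$.

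\textbf{Step 3: simplify to \eqref{bjeby}.} The $r=2$ term of the sum is $3/\sqrt{L}$, which absorbs the first bracket. For the second bracket I need
\[
\frac{3}{2\sqrt L}+\frac{2}{L}\le \frac{3}{\sqrt L}+\frac{2}{L}.
\]
This holds, and the right-hand side is exactly the sum of the $r=2$ term $3/\sqrt L$ and the $r=1$ term $2/L$.

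\textbf{Main obstacle.} I expect the hardest part to be the non-geometrically-integral and low-degree cases. The delicate point is checking that subsums of size $n$ or $n+1$ are excluded. It also needs checking that Lemma \ref{lem:degreecontrol} applies to surfaces, that is, in positive codimension, which requires $n\ge 3$.
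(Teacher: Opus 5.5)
Your proposal is correct and is essentially the paper's proof: cover the points by the surfaces of Lemma \ref{le number of aux surfaces}, reduce to geometrically integral surfaces (the paper defers to Salberger's argument where you invoke the curve bound directly), discard surfaces of degree $<L(d,n)$ via Lemma \ref{lem:degreecontrol} and curves of degree $<L(d,n)$ via Lemma \ref{le no small degree curves}, then apply Theorem \ref{lem:matteo} and absorb the bracket into the $r=1,2$ terms exactly as in your Step 3. On the point you flag, your ``forced zero coordinate'' remark does not work for $\#I=n+1$, where nothing is forced and the lemma read literally is vacuous because $Q_{\vect{a}}$ itself vanishes on $X_n$; the right fix is that the argument behind Lemma \ref{lem:degreecontrol}, run with Lemma \ref{le no small degree curves}, places $Y$ inside a quasi-diagonal subvariety, so $I$ can be taken with $2\le \#I\le n-1$ (this is what the paper tacitly uses) and the cases $\#I\ge n$ never arise.
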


This affirms the first claim of Theorem \ref{thm:main-counting}.
\begin{proof}
    By  Lemma \ref{le number of aux surfaces}, the rational points lie on at most 
    $O_{d,n,\vep}\left(B^{\sum_{r=3}^{n-1}\frac{r+1}{\sqrt[r]{L(d,n)}}+\vep}\right)
    $ surfaces. 
    These surfaces are integral but not necessarily geometrically integral, however we may reduce to the case of geometrically integral surfaces by following the argument of \cite[Thm 2.1]{Salberger2}. 
    By Lemma \ref{lem:degreecontrol}, we know that the points with no vanishing subsums must lie on surfaces of degree at least $L(d,n)$. By Lemma 
    \ref{le no small degree curves}, any curve on such a surface also has degree at least $L(d,n)$. Therefore, we conclude by applying Theorem \ref{lem:matteo}.
\end{proof}

Finally, we record some results on counting zeros of forms in a small number of variables.
\begin{lemma}[{\cite[Thm. 5]{HBannals}}]\label{lem:curves}
    Let $C$ be an irreducible curve in $\mathbb{P}^3$ of degree $d$. Then the number of points of height at most $B$ on $C$ is $O_{d,\vep}(B^{\frac{2}{d}+\vep})$.
\end{lemma}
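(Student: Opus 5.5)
The plan is to reduce to the case of plane curves and then invoke (or re-run) the $p$-adic determinant method of Heath-Brown for plane curves. That result, \cite[Thm. 3]{HBannals}, states that an irreducible plane curve of degree $d$ has $O_{d,\vep}(B^{2/d+\vep})$ rational points of height at most $B$, uniformly in the coefficients.

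\emph{Planar case.} If $C$ lies in a plane $\Pi\subset\mathbb{P}^3$ defined over $\mathbb{Q}$, choose a basis of the rank-3 lattice $\Pi\cap\Z^4$ that is reduced in the sense of Minkowski. A primitive point of height $\le B$ on $\Pi$ then has coordinates $\ll B$ in this basis, with an implied constant depending only on the successive minima. These minima are $\gg 1$, so the constant is in fact absolute. The image of $C$ is an irreducible plane curve of degree $d$, and the plane curve bound applies directly.

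\emph{Non-planar case.} Here the step is a projection $\pi_P:\mathbb{P}^3\dashrightarrow\mathbb{P}^2$ from a rational point $P\notin C$, chosen so that $\pi_P|_C$ is birational onto its image. Then $C':=\pi_P(C)$ is an irreducible plane curve of degree exactly $d$.

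1. Since $C$ is not planar, a generic point of $\mathbb{P}^3$ lies on only finitely many secant lines of $C$. Hence the points $P$ for which $\pi_P|_C$ fails to be birational, or for which $P\in C$, form a proper Zariski-closed subset of degree $O_d(1)$.
2. Consequently some $P$ of height $O_d(1)$ avoids this subset. We take a linear map $\Z^4\to\Z^3$ with kernel spanned by $P$ and coefficients $O_d(1)$. It sends a point of height $\le B$ to an integral vector of norm $\ll_d B$, and dividing by the gcd only decreases the height.
3. Finally, $\pi_P|_C$ is injective away from the preimages of the finitely many singular points of $C'$. There are $O_d(1)$ such singular points, each with at most $d$ preimages. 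So $\#\{x\in C(\mathbb{Q}):H(x)\le B\}\le \#\{y\in C'(\mathbb{Q}):H(y)\ll_d B\}+O_d(1)$, and the plane bound gives $O_{d,\vep}(B^{2/d+\vep})$.

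\emph{Main obstacle.} The genuinely hard input is the uniform plane curve bound itself. If it had to be reproved, I would follow Heath-Brown's scheme. Cover the points modulo auxiliary primes $p\approx B^{2/d}$ according to their reduction. For the points in one residue class, form the determinant of monomials of degree $D$ evaluated at those points. This determinant is divisible by a large power of $p$ (from $p$-adic Taylor expansion at a non-singular point), while Hadamard's inequality makes it small. So it vanishes, which forces the points onto an auxiliary curve of degree $O_{d,\vep}(1)$ not containing $C$. Bézout's theorem then gives $O_d(1)$ points per class, and there are $O(B^{2/d+\vep})$ classes.

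The delicate points are two. First, the choice of $D$, so that the $p$-adic gain beats the archimedean size exactly at the exponent $2/d$. Second, handling singular residue classes and uniformity in the coefficients, using a bounded number of primes. Here I would take the statement as given from \cite{HBannals}, and the only work specific to our setting is the reduction in the non-planar case.
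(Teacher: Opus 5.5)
Your proposal is correct and is essentially the argument behind the paper's bare citation of \cite[Thm.~5]{HBannals}, since the paper supplies no proof of its own: reducing a space curve to a plane curve of the same degree (via a Minkowski-reduced basis when $C$ is planar, and via a birational projection from a rational centre of height $O_d(1)$ otherwise) and then applying Heath-Brown's uniform plane-curve bound is the standard way this statement is obtained. Two small remarks: your secant-variety step tacitly assumes $C$ is geometrically irreducible, but if $C$ is irreducible over $\mathbb{Q}$ and not geometrically irreducible then $C(\mathbb{Q})$ has only $O_d(1)$ points; and in the paper the lemma is only ever applied to ternary forms $a_{i_1}x_{i_1}^d+a_{i_2}x_{i_2}^d+a_{i_3}x_{i_3}^d=0$, which are already plane curves, so there only the plane-curve theorem itself is needed.
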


\begin{lemma}\label{le 2 dim count}
    If $a,b$ are non-zero integers and $B\geq 1$,
    then $$
    \#\{\vert x\vert, \vert y \vert \leq B : ax^d = by^d \} \leq 8B+1.
    $$
\end{lemma}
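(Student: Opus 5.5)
The plan is to count directly, using the fact that $\gcd$-free reduction collapses the solution set to a small number of rays.

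First we dispose of the degenerate case where $x=0$ or $y=0$. If $x=0$, then $by^d=0$ with $b\neq0$ forces $y=0$, and symmetrically for $y=0$. So the only such solution is $(0,0)$, which accounts for the $+1$.

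For the main case, consider a solution with $xy\neq 0$. Write $g=\gcd(x,y)$, $x=gx'$ and $y=gy'$ with $\gcd(x',y')=1$. Then $a x'^d = b y'^d$. Since $\gcd(x'^d,y'^d)=1$, we get $x'^d \mid b$ and $y'^d\mid a$. Moreover, the ratio $x'^d/y'^d=b/a$ is fixed, so the reduced fraction $x'/y'$ is determined up to sign by $b/a$: if $|x'|^d/|y'|^d=|b|/|a|$ with both fractions in lowest terms, then $|x'|^d$ and $|y'|^d$ equal the numerator and denominator of $|b/a|$ in lowest terms. Positive $d$-th roots are unique, so $(|x'|,|y'|)$ is uniquely determined. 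This leaves at most $4$ sign choices for $(x',y')$; when $d$ is even the equation is sign-insensitive, and when $d$ is odd only $2$ survive. Hence all nonzero solutions lie on at most $4$ rays of the form $\{(gx',gy'): g\ge 1\}$, or, more cleanly, on at most $2$ lines through the origin, $\{t(x',\pm y')\}$ with $t\in\mathbb{Z}\setminus\{0\}$.

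On each such line, the condition $|tx'|\le B$ with $|x'|\ge1$ gives at most $2B$ nonzero values of $t$. Two lines then give at most $4B$ solutions, and even counting $4$ sign classes separately gives at most $4\cdot 2B=8B$. Adding the origin yields $8B+1$.

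There is no real obstacle. The only care needed is the uniqueness of $(|x'|,|y'|)$, which comes from coprimality together with uniqueness of positive $d$-th roots, and making sure the sign bookkeeping does not exceed the constant $8$. I would simply bound this crudely by $4$ sign patterns, each with $g\le B$.
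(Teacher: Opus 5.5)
Your proof is correct, and it reaches the conclusion by a different decomposition from the paper's. The paper first reduces to $\gcd(a,b)=1$ and splits by prime support: $x=x_1x_2$ with $x_1\mid b^\infty$, $\gcd(x_2,b)=1$, and $y=y_1y_2$ with $y_1\mid a^\infty$, $\gcd(y_2,a)=1$. It then asserts $x_2^d=y_2^d$ and $x_1^d=b$, $y_1^d=a$, and counts $4B$ pairs $(x_2,y_2)$ times $4$ pairs $(x_1,y_1)$, halved for the sign redundancy. You instead divide out $g=\gcd(x,y)$ and use only uniqueness of a fraction in lowest terms and of positive $d$-th roots. Hence every nonzero solution is $g(\pm u,\pm v)$ with $(u,v)$ determined by $a,b$ and $1\le g\le B$.

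Your normalization is the more robust one. The paper's step $x_2^d\mid y_2^d$ tacitly needs $\gcd(x_2,y_1^d/a)=1$, and this fails when $x$ and $y$ share a prime factor of $a$. For example, $a=4$, $b=1$, $d=2$, $(x,y)=(2,4)$ gives $x_2=2$, $y_1=4$, $y_2=1$, so neither $x_2^d=y_2^d$ nor $y_1^d=a$ holds. Removing the full common factor $\gcd(x,y)$ leaves a coprime pair, for which uniqueness is immediate.

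Your count also gives the sharper bound $4\lfloor B\rfloor+1$. The intermediate figure $4\cdot 2B$ in your write-up overcounts, since each ray $\{g(x',y'):g\ge 1\}$ holds at most $\lfloor B\rfloor$ points, not $2B$. This is harmless for an upper bound.
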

\begin{proof}
    We may assume without loss of generality that $a$ and $b$ are coprime. If $x=0$ then we must have $y=0$ and vice versa, since $ab \neq 0$. Henceforth assume that $x$ and $y$ are non-zero. Write
    \begin{align*}
        x &=x_1x_2\\
        y&=y_1y_2,
    \end{align*}
    where $x_1$ and $y_1$ are defined by the relation $p|x_1$ implies $p|b$ and where $q|y_1$ implies $q|a$ for all prime $p$ and $q$ (or rather $x_1 \mid b^\infty, y_1 \mid a^\infty$) and $\gcd(x_2,b) = \gcd(y_2,a)=1$. We deduce from the coprimality that $b \mid x_1^d$ and $a \mid y_1^d$, thus our equation can be written
    \[
    \frac{x_1^d}{b} x_2^d = \frac{y_1^d}{a}y_2^d.
    \]
    Hence $x_2^d \mid y_2^d$ and vice versa, hence $x_2^d = y_2^d$. Therefore there are at most $4B$ choices for $(x_2,y_2)$. Finally, substituting this equality into the original equation yields 
    \[ax_1^d = by_2^d\] which implies $x_1^d = b$ and $y_1^d = a$ from which we deduce that there are at most 4 choices for $(x_1,y_1)$. Note that $(-x_1, -x_2)$ produces the same $x$ as $(x_1,x_2)$ hence the claimed bound.
\end{proof}

With all this in hand, we can conclude our upper bound for the point count.
\begin{theorem}\label{thm main counting}
Let $d>n>3$.
Let $m$ be the largest number of disjoint pairs of distinct indices $\{i,j\}$ such that $-a_ia_j$ is a $d^\text{th}$ power. Then, as $B \rightarrow \infty$, we have
\[
\mathcal{N}_{Q_{\vect{a}}}(B)
\ll_{\vep, d,n} 
\sum_{s=0}^m B^{s + \sum_{r=1}^{n-2s-1} \frac{r+1}{\sqrt[r]{L(d,n)}}  + \vep}.
\]
\end{theorem}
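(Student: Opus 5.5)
We plan to stratify the rational points of $X_n$ according to their pattern of vanishing subsums, and to count each stratum by combining Corollary~\ref{cor:nosubsum} (applied to smaller diagonal equations) with Lemma~\ref{le 2 dim count}.

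Given a primitive point $\vect x$ with $H(x)\le B$, we choose a partition $\{0,\dots,n\}=P_1\cup\dots\cup P_t$ that is maximally refined subject to $\sum_{j\in P_k}a_jx_j^d=0$ for every $k$. Each block then carries a vanishing sum with no proper vanishing subsum. Indices $i$ with $x_i=0$ are treated separately, by deleting those variables. Only $O_n(1)$ partitions occur, so it suffices to bound the count for a fixed partition. The variables in distinct blocks are independent apart from the common height bound, so the number of integer vectors with $\|\vect x\|_\infty\le B$ is at most a product over blocks. We use affine rather than projective counts, which costs a factor $B$ per block; this is compensated by the primitivity and projective normalisation of each block.

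Each block is counted according to its size. A block of size $2$, say $\{i,j\}$, needs $a_ix_i^d=-a_jx_j^d$ with $x_i x_j\neq 0$. This has a nonzero solution only if $-a_ia_j$ is, up to the coprime reduction of Lemma~\ref{le 2 dim count}, a $d$th power, and then it has $O(B)$ solutions, i.e.\ $O(1)$ projectively. Any collection of such blocks consists of disjoint pairs of this kind, so there are at most $m$ of them. A block of size $k\ge3$ is a diagonal equation in $k$ variables with no vanishing subsum, whose projective count is $O(B^{\sum_{r=1}^{k-2}\frac{r+1}{\sqrt[r]{L(d,k-1)}}+\vep})$. For $k-1>3$ this comes from Corollary~\ref{cor:nosubsum}. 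For $k=3$ (a plane curve of degree $d$) we use Lemma~\ref{lem:curves}, or Bombieri--Pila, which gives $B^{2/d+\vep}$. For $k=4$ we use Salberger's $n=3$ results. Since $L(d,n)$ is decreasing in $n$, we have $L(d,k-1)\ge L(d,n)$ for $k\le n+1$, so we may replace $L(d,k-1)$ by $L(d,n)$ throughout.

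Suppose the partition has $s\le m$ pairs and the remaining $n+1-2s$ indices split into larger blocks of sizes $k_1,\dots,k_u$. The affine count is then at most
\[
B^{s}\prod_{v}B^{1+\sum_{r=1}^{k_v-2}\frac{r+1}{\sqrt[r]{L}}+\vep},
\]
and we divide by one overall factor of $B$ for the projective normalisation. The main obstacle is that every block beyond the first contributes an extra factor $B$ (its affine scaling). The exponent must therefore be organised so that a block of size $k$ contributes at most $\sum_{r}\frac{r+1}{\sqrt[r]{L}}$ with indices running over a disjoint range $r\in\{r_v,\dots,r_v+k_v-2\}$. The extra $+1$ per block should be absorbed by the $r=1$ terms, or rather, the terms $\frac{r+1}{\sqrt[r]{L}}$ for small shifted $r$ are at least $1$ only when $L$ is small. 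When that fails, we instead keep the bound from the larger block only and note that the sums $\sum_{r=1}^{n-2s-1}$ dominate $\sum_v\sum_{r=1}^{k_v-2}$ plus $(u-1)$ only for suitable $d$. If this bookkeeping does not close, I would instead merge all non-pair blocks into a single diagonal equation in $n+1-2s$ variables with no vanishing \emph{pair} subsum, and apply the first claim of Theorem~\ref{thm:main-counting} in dimension $n-2s$. That gives exactly $B^{\sum_{r=1}^{n-2s-1}\frac{r+1}{\sqrt[r]{L(d,n-2s)}}+\vep}$, and hence the stated bound by monotonicity of $L$. Summing over $s\le m$ then completes the proof.
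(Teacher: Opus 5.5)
\paragraph{Same approach as the paper, but the bookkeeping step fails.}
Your stratification is the paper's own. The paper also takes a maximally refined partition into vanishing blocks, counts pairs with Lemma~\ref{le 2 dim count}, ternary blocks with Lemma~\ref{lem:curves} and larger blocks with Corollary~\ref{cor:nosubsum}. It then closes with one sentence: $s$ binary subsums contribute ``$B^s$ times the number of ways the remaining $n+1-2s$ variables can vanish'', and that number is read as the projective count $B^{\sum_{r=1}^{n-2s-1}(r+1)/\sqrt[r]{L}}$. You located the delicate point correctly, but your way through it does not work. The extra factor $B$ per block is \emph{not} compensated by primitivity, and dividing the affine product by one factor $B$ is not a valid upper bound.

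Suppose the pair blocks have primitive solutions $p^{(1)},\dots,p^{(s)}$ and the other blocks have primitive solutions $y^{(1)},\dots,y^{(u)}$, each placed in its own coordinates. Then $[t_1p^{(1)}:\dots:t_sp^{(s)}:\lambda_1y^{(1)}:\dots:\lambda_uy^{(u)}]$ lies on $X_n$ for all integers $t_k,\lambda_v$. This is a standard linear space $\mathbb{P}^{s+u-1}\subset X_n$, so the stratum has $\gg B^{s+u}$ points of height at most $B$ as soon as each block has one rational point. What multiplies $B^s$ is the \emph{affine} count $\sum_y\lfloor B/H(y)\rfloor\gg B$ of each remaining block, not its projective count.

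Your fallback has the same defect, for two reasons. First, the first claim of Theorem~\ref{thm:main-counting} is supported in the paper only by Corollary~\ref{cor:nosubsum}, which covers points with no vanishing subsum at all, not all of $X_n\setminus W$. So invoking it presupposes exactly the multi-block case you could not handle. Second, even if the merged block had small projective count, its affine count is what gets multiplied by $B^s$.

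\paragraph{The statement is false as written, so the gap cannot be closed.}
\begin{itemize}
\item Take $n=4$, $d$ even and large, and $\mathbf{a}=(1,-1,1,1,-2)$. The pairs with $-a_ia_j\in\mathbb{Z}^{[d]}$ are $\{0,1\},\{1,2\},\{1,3\}$, all through index $1$, so $m=1$. The right-hand side is then $B^{2/L+3/\sqrt{L}+4/\sqrt[3]{L}+\varepsilon}+B^{1+2/L+\varepsilon}\ll B^{3/2}$ for $L=L(d,4)$ large. But the line $[t:t:\lambda:\lambda:\lambda]$ lies on $X_4$ and gives $\gg B^2$ points.
\item Take $n=5$, $d$ even and large, and $\mathbf{a}=(1,1,-2,1,2,-3)$. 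Here $m=0$, yet $[\lambda:\lambda:\lambda:\mu:\mu:\mu]$ gives $\gg B^2$ points, almost all outside $W$. This is precisely your two-block case, and it also refutes the first claim of Theorem~\ref{thm:main-counting}.
\item You half-noticed a problem with the pair criterion. $a_ix^d+a_jy^d=0$ is solvable iff $-a_i/a_j\in(\mathbb{Q}^{\times})^d$, which is not the same as $-a_ia_j\in\mathbb{Z}^{[d]}$; take $a_i=4$, $a_j=-4$, $d\ge5$. So ``at most $m$ pair blocks'' fails with $m$ as defined.
\end{itemize}

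\paragraph{What your argument does prove.}
\begin{itemize}
\item Points on no quasi-diagonal subvariety number $\ll B^{\sum_{r=1}^{n-1}(r+1)/\sqrt[r]{L}+\varepsilon}$; this is Corollary~\ref{cor:nosubsum}.
\item A stratum with $s$ pair blocks and further blocks of sizes $k_1,\dots,k_u$, with $s+u\ge2$, has $\ll B^{\,s+\sum_v\max(1,\Sigma_{k_v})+\varepsilon}$ points, where $\Sigma_k=\sum_{r=1}^{k-2}(r+1)/\sqrt[r]{L}$. This follows by multiplying affine block counts, using $\sum_{H(y)\le B}B/H(y)\ll B^{\max(1,\Sigma_k)+\varepsilon}$.
\end{itemize}
The examples show that the exponent $s+u$ cannot be lowered in general. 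The statement would need to be corrected along these lines, with the pair condition replaced by $-a_i/a_j\in(\mathbb{Q}^{\times})^d$.
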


\begin{proof}
    We proceed via strong induction on $n\ge 4$. When bounding the number of zeros of binary forms we use Lemma \ref{le 2 dim count}, for zeros of ternary forms we use Lemma \ref{lem:curves}, for zeros of quaternary forms we require the following bound, \eqref{tilde bound}. To that end, let $\wt{n}=3$, then the number of points avoiding a vanishing subsum is counted by Theorem \ref{lem:matteo}.
    Taking Theorem \ref{thm:salcurves} into account, 
    we infer that the number of such points is
    \begin{align}\label{tilde bound}
    \cN_{Q_{\wt{\vect{a}}}}(B)\ll_{\varepsilon,d}
    B^{\frac{3}{\sqrt{d}}+\vep}+B^{\frac{3}{2\sqrt{d}}+ \frac 2{L(d,3)} + \vep},
    \end{align}
    where $Q_{\wt{\vect{a}}}$ is a quaternary diagonal form.
    Should there exist a subsum, it must be a pair of binary subsums. In this case we have $m=2$ and we may apply Lemma \ref{le 2 dim count}.
    Hence the number of points is
    \[
    \begin{cases}
    O_{\varepsilon,d}\left(
    B^{\frac{3}{\sqrt{d}}+\vep}+B^{\frac{3}{2\sqrt{d}}+ \frac 2{L(d,3)} + \vep} \right)  = O_{\varepsilon,d}\left( B^{\frac{3}{\sqrt{L(d,3)}}+\frac{2}{L(d,3)}+\vep}\right) \text { if } m=0\\
    O\left(B^2\right) \text{ if } m=2,
    \end{cases}
    \] completing the proof in this case.

    Suppose that $n=4$. The total number of points avoiding vanishing subsums is provided by Corollary \ref{cor:nosubsum} which yields
    \[
    O_{\varepsilon,d} \left(
    B^{\frac{4}{\sqrt[3]{L(d,4)}}+\vep}\left( B^{\frac{3}{\sqrt{L(d,4)}}} + B^{\frac{3}{2\sqrt{L(d,4)}}+\frac{2}{L(d,4)}+\vep} \right)\right).
    \]
    If a quaternary subsum vanishes then we have one choice for the remaining variable and the bound
    \[
    O_{\varepsilon,d}\left(B^2 + B^{\frac{3}{\sqrt{L(d,4)}}+\frac{2}{L(d,4)}+\vep}\right)
    \] for the four relevant variables, by the previous paragraph.
    If a point is not enumerated by either of these counts then there must be a set of 3 indices $\{i_1,i_2,i_3\} \subset \{0,\ldots,4\}$ such that
    \[
    a_{i_1}x_{i_1}^d +a_{i_2}x_{i_2}^d+a_{i_3}x_{i_3}^d=0.
    \]
    The number of potential solutions to the above display by Lemma \ref{lem:curves} is $O_\varepsilon(B^{\frac{2}{L(d,n)}+\vep})$. By Lemma \ref{le 2 dim count}, there are $O(B)$ choices for the remaining two $x_i$ variables. In total we infer 
    \[
    N_{Q_{\vect{a}}}(B)
    \ll_{\varepsilon,d,n}
    B^2 + B^{1 + \frac{2}{L(d,n)}+\vep} + B^{\frac{4}{\sqrt[3]{L(d,n)}}+\frac{3}{\sqrt{L(d,n)}}+\frac{2}{L(d,n)}+\vep}.
    \]
    
    Now suppose the claim holds for all $3<j<n$. The number of points for which no proper subsum vanishes is counted by \eqref{bjeby} which gives
    \[
    O_{\varepsilon,d,n}\left(
    B^{\sum_{r=1}^{n-1}\frac{r+1}{\sqrt[r]{L(d,n)}} + \vep}
    \right).
    \]
    For any remaining point $\vect x$, there exists a minimal partition $I_1, \ldots,I_j$ of $\{0,1,\ldots, n\}$
    such that each subsum $\sum_{i \in I_k} a_i x_i^d$ vanishes and such that there is no partition satisfying the same property with a greater number of parts in the partition.  If $I_k$ is a singleton then there is only 1 solution. For any $I_k$ of size 2, we apply the Lemma\ref{le 2 dim count}. For any $I_k$ of size 3, we apply Lemma \ref{lem:curves}. Any other part in the partition has size between 4 and $n$, thus we may apply the inductive hypothesis.

    In particular, the number of points which have exactly $s$ binary vanishing subsums is bounded by $B^s$ times the number of ways that the remaining $n+1-2s$ variables can vanish without there being smaller vanishing subsum. This provides precisely the $s^\text{th}$ term in the sum.
 \end{proof}

\begin{remark}
    The conclusion of Theorem \ref{thm main counting} is probably suboptimal as currently written. This is because at each step when one removes variables corresponding to subsums, one should be able to conclude that the degree bound on subvarieties of the subsums is $L(d, \dim (\text{subsum}))$ rather than the full $L(d,n)$.
\end{remark}

    \section{Proof of Theorem \ref{thm:main corr}}
    \label{s:correlations}
    With Theorem \ref{thm:main-counting} is at hand, the proof of Theorem \ref{thm:main corr} follows a standard procedure, first applied in this context by Rudnick and Sarnak \cite{RS} but later improved for pair correlations by Aistleitner, Larcher, Lewko \cite{AistleitnerLarcherLewko2017} and generalised to higher correlations by Technau and Yesha \cite{TechnauYesha2020}. Namely, \cite[Proposition 7.1]{TechnauYesha2020} implies that Theorem \ref{thm:main corr} follows from the following bounds via an approximation argument.  Let $\cF_{\ell}\subset C_c^{\infty}(\R^{\ell-1})$ denote the set of rectangular smooth functions, i.e.
    \begin{align*}
        \cF_{\ell} : = \bigg\{f(\vect{x}) = \prod_{i=1}^{\ell-1} f_i(x_i), \quad \text{ where } f_1,\ldots,f_{\ell-1} \in C_c^{\infty}(\R^{\ell-1}) \bigg\}.
    \end{align*}

    \begin{lemma}
        Let $\ell\geq 2$.
        For $\cX_\alpha^d$ as defined in 
        \eqref{def ell corr func}, $d > d_\ell$ and $f\in \cF_{\ell}$
        there exists an $\vep>0$ such that
        \begin{align}\label{eq expectation}
            \int_{0}^{1} R_{\ell}^N(\cX_{\alpha}^d, f) \, d\alpha
            = \expect{f} + O_{f,\varepsilon}(N^{-\vep})
        \end{align}
        and
        \begin{align}\label{eq variance}
            \int_{0}^{1} (R_{\ell}^N(\cX_{\alpha}^d,f))^2 
            \, d\alpha
            = 
            \Bigg( \int_{0}^{1} R_{\ell}^N(\cX_{\alpha}^d,f) 
            \, d\alpha \bigg)^2+ O_{f,\varepsilon}(N^{-\vep}).
        \end{align}
    \end{lemma}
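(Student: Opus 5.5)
We expand $R_\ell^N$ by Poisson summation in each of the $\ell-1$ coordinates. Writing $f=\prod f_i$, we get
\[
R_\ell^N(\cX_\alpha^d,f)=\frac{1}{N^{\ell}}\sum^{\ast}_{\vect n\in[N]^\ell}\sum_{\vect m\in\Z^{\ell-1}}\prod_{i=1}^{\ell-1}\wh{f_i}\Big(\frac{m_i}{N}\Big)\,e\Big(\alpha\sum_{i=1}^{\ell-1}m_i(n_i^d-n_{i+1}^d)\Big).
\]
Since the $\wh{f_i}$ are Schwartz, we truncate to $\|\vect m\|_\infty\le N^{1+\vep}$ at cost $O(N^{-A})$. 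The phase is $\alpha\sum_{j=1}^{\ell} c_j n_j^d$ with $c_1=m_1$, $c_j=m_j-m_{j-1}$ for $2\le j\le \ell-1$, and $c_\ell=-m_{\ell-1}$, so $\sum_j c_j=0$.

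\textbf{Expectation.} Integrating over $\alpha\in[0,1]$ kills every term with $\sum_j c_jn_j^d\neq0$. The term $\vect m=0$ gives $N^{-\ell}\#\{\text{distinct }\vect n\}\prod\wh{f_i}(0)=\expect{f}+O(N^{-1})$. The remaining terms count pairs $(\vect m,\vect n)$ with $\vect m\neq 0$ and $\sum_jc_jn_j^d=0$. We group by the support of $\vect c$. On the support, this is a diagonal equation in $k\le \ell$ variables with nonzero coefficients. For fixed $\vect c$ we bound the number of solutions $\vect n$ by Theorem \ref{thm:main-counting} (after passing from affine to projective points, which costs a factor $N^{1+\vep}$ via the scaling $\gcd$, or directly in the affine form). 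Genuinely new solutions, with no vanishing subsum, are then $O(N^{\eta})$, where $\eta$ is tiny once $d>d_\ell$. Quasi-diagonal solutions are handled separately. A binary piece $c_in_i^d+c_jn_j^d=0$ with $n_i\neq n_j$ forces $-c_ic_j$ to be a $d$th power up to the ratio $(n_j/n_i)^d$, i.e.\ $c_i=-c_j t^d/s^d$. We then count jointly over $(\vect c,\vect n)$: the pair $(n_i,n_j)$ determines the ratio, and the $c$'s are constrained to a sublattice. Each binary block saves a factor $N$ relative to the trivial bound $N^{\ell}\cdot N^{\ell-1}$ on the full count, and with the normalisation $N^{-\ell}$ plus the weights $\prod\wh{f_i}(m_i/N)$ the total contribution is $O(N^{-\vep})$. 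The key point is that every solution is made of pieces, each costing at least one unit relative to free summation, because distinctness of the $n_j$ removes the trivial solutions $n_i=n_j$ with $c_i=-c_j$.

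\textbf{Variance.} Squaring gives a sum over $(\vect m,\vect n,\vect m',\vect n')$ with $2\ell$ variables $n_j,n'_j$ subject to $\sum c_jn_j^d-\sum c'_jn_j'^d=0$. Subtracting the square of the mean removes exactly the configurations where both halves vanish separately. What remains are solutions of a diagonal equation in up to $2\ell$ variables that does not split as (first half) $+$ (second half) $=0$. We apply Theorem \ref{thm:main-counting} with $n+1\le 2\ell$, which is admissible since $d>(2\ell)^{4\ell}$ makes $\sum_{r}(r+1)L(d,2\ell-1)^{-1/r}$ tiny. This leaves the quasi-diagonal solutions that pair a variable of $\vect n$ with one of $\vect n'$, e.g.\ $n_i=n'_j$ and $c_i=c'_j$. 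Each such cross-pairing still costs a factor $N$ in the count, and a combinatorial enumeration of pairings shows the total is $N^{2\ell}\cdot N^{2\ell-2}\cdot N^{-1+\vep}$ at worst, which becomes $O(N^{-1+\vep})$ after normalisation by $N^{-2\ell}$.

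\textbf{Main obstacle.} The hard step is the joint count over coefficients and variables of the quasi-diagonal configurations. Theorem \ref{thm:main-counting} is uniform in $\vect a$, but the bound $B^{m}$ coming from binary blocks is too large unless we also exploit that only few $\vect c$ admit such blocks. I would first try to parametrise the binary blocks as $c_i n_i^d=-c_jn_j^d$. With $g=\gcd(n_i,n_j)$, this forces $c_i=\lambda (n_j/g)^d$ and $c_j=-\lambda(n_i/g)^d$, so for $n_i\neq n_j$ and $|c|\le N^{1+\vep}$ the number of admissible $(c_i,c_j,n_i,n_j)$ is $\ll N^{2+\vep}$, one power less than the naive $N^3$ (large $d$ forces $n_i/g,n_j/g$ small). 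This gives the required saving per block, and the non-quasi-diagonal remainder is absorbed by Corollary \ref{cor:nosubsum}.
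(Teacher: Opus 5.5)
Your route differs from the paper's mainly in how binary blocks are handled. The paper stratifies coefficient vectors by $m(\mathbf a)$, applies the uniform bound $N^{\phi(m,d,\ell)+\vep}$ of Theorem~\ref{thm:main-counting} to each one, and counts the strata with Tolev's estimate, reducing everything to $\phi(0,d,\ell)<1$ (resp.\ $\phi(0,d,2\ell)<2$). You instead count a binary block jointly through $c_i=\lambda p^d$, $c_j=-\lambda q^d$, $(n_i,n_j)=(gq,gp)$. That part is sound: it gives $\ll N^{2+\vep}$ quadruples per block, with the correct solvability criterion ($-c_i/c_j$ a $d$-th power in $\mathbb{Q}$).

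The gap is in the part you treat as routine, the pieces of size at least $3$. Corollary~\ref{cor:nosubsum} counts projective points. For fixed $\mathbf c$, each positive primitive solution $\mathbf x$ of height $H$ produces $\asymp N/H$ affine solutions $\lambda\mathbf x$. A solution made of $j$ such pieces carries $j$ independent dilations, so for suitable admissible $\mathbf c$ the number of $\mathbf n\in[N]^\ell$ is $\gg N^{j}$. Hence there is no per-$\mathbf c$ bound $O(N^{\eta})$. Even the single factor $N^{1+\vep}$ you mention is fatal, since $N^{-\ell}\cdot N^{\ell-1}\cdot N^{1+\eta+\vep}=N^{\eta+\vep}\not\to 0$. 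The paper applies Theorem~\ref{thm:main-counting} directly to $\#\{\mathbf n\in[N]^\ell : Q_{\mathbf a}(\mathbf n)=0\}$, so it does not supply this step for you. The same issue recurs in your variance argument.

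Your bookkeeping also has to change. Against the trivial count $N^{\ell-1}\cdot N^{\ell}$ you must save $N^{\ell-1+\delta}$. Since there are at most $\ell/2$ pieces, \emph{a factor $N$ per binary block} or \emph{one unit per piece} is far too weak.

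The right target is a joint bound. Each piece $B$ of a finest partition into vanishing subsums should contribute $\ll N^{|B|+o(1)}$ pairs $(\mathbf c_B,\mathbf n_B)$. The product over pieces, with one use of $\sum_j c_j=0$, then gives $\ll N^{\ell-1+o(1)}$ solutions, i.e.\ $N^{-1+o(1)}$ after normalisation. Your binary count meets this target.

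For $|B|\ge 3$ you must also count jointly:
\begin{itemize}
\item write $\mathbf n_B=\lambda\mathbf x$ with $\mathbf x$ primitive;
\item count $\mathbf c_B$ as lattice points of $\{\mathbf c:\mathbf c\cdot\mathbf x^{d}=0\}$, with $\mathbf x^d$ taken coordinatewise, a lattice of covolume $\asymp\|\mathbf x\|^{d}$;
\item use Corollary~\ref{cor:nosubsum} only to control how many $\mathbf x$ of a given height admit a short admissible $\mathbf c_B$.
\end{itemize}
The naive version of this does not suffice. Using at most $N^{|B|-1}$ vectors $\mathbf c_B$ for each small-height $\mathbf x$, and the projective bound for large heights, yields only about $N^{|B|+1-1/|B|}$ per piece. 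That handles one piece of size $\ge 3$ but not two.

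Finally, in the variance, $N^{2\ell}\cdot N^{2\ell-2}\cdot N^{-1+\vep}$ normalises to $N^{2\ell-3+\vep}$, not $N^{-1+\vep}$. What is needed is $\ll N^{2\ell-1+\vep}$ solutions, which is exactly the size of the full diagonal $\mathbf n'=\mathbf n$, $\mathbf c'=\mathbf c$.
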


    \begin{proof}

        First, apply Poisson summation and a change of variables to the sum in $\vect{k}$ in the definition of the $\ell$-correlation, that is 
        \begin{align*}
                R_{\ell}^N(\cX_\alpha^d,f)  &=
            \frac{1}{N} 
            \sum_{\vect{n}\in [N]^\ell}^{\ast}
            \sum_{\vect{k}\in\Z^{\ell-1}} 
            f  (N(x(n_1)-x(n_2) +k_1, \dots , x(n_{\ell-1})-    x(n_\ell)+k_{\ell-1})  )    ,\\
            &= 
            \frac{1}{N^{\ell}}\sum_{\vect{n}\in [N]^\ell}^{\ast}
            \sum_{\vect{a}\in\cA_{\ell}(N)}   \wh{f} \bigg(\frac{\vect{a}}{N}\bigg) e(\alpha (\vect{a}\cdot\vect{n}^d)) + O(N^{-\vep}),
        \end{align*}
        where 
        \begin{equation}\label{def A_ell}
            \cA_{\ell}(N):= \{(\vect{a},a_\ell) 
           \ : \vect{a}\in(
            \Z^{\ell-1}\setminus\{\vect{0}\})
            \cap[-N^{1+\varepsilon},
            N^{1+\varepsilon}]^{\ell-1}, \,
            a_\ell = -a_1-\dots - a_{\ell-1} \}.
        \end{equation}
        Note that the fast decay of Fourier coefficients allows us 
        to keep $|a_i| \le N^{1+\vep}$ for all $i\leq \ell$.
        Hence, 
        \begin{equation}\label{eq Fourier 
        expansion}
            R_{\ell}^N(\cX_\alpha^d,f)= \expect{f} + 
            \frac{1}{N^{\ell}}\sum_{\vect{n}\in [N]^\ell}^{\ast}
            \sum_{\vect{a}\in\cA_\ell(N)}   \wh{f} \bigg(\frac{\vect{a}}{N}\bigg) e(\alpha Q_{\vect{a}}(\vect{n})) + O_{f,\varepsilon}(N^{-\vep})
            ,
        \end{equation}
        where $\vect{n}^d := (n_1^d, \dots, n_\ell^d)$ and $Q_{\vect{a}}(\vect{n}) : = \vect{a}\cdot \vect{n}^d$.

       Henceforth, denote 
        \[
        \cE := \frac{1}{N^{\ell}}\sum_{\vect{n}\in [N]^\ell}^{\ast}
            \sum_{\vect{a}\in\cA_\ell(N)}   \wh{f} \bigg(\frac{\vect{a}}{N}\bigg) e(\alpha Q_{\vect{a}}(\vect{n})).
        \]
        Note that the integral is $0$ unless $Q_{\vect{a}}(\vect{n})$ is $0$. Hence, on fixing a small $\vep>0$, we have
        \begin{align*}
            \cE \ll_f \frac{1}{N^\ell} \sum_{\vect{a}\in \cA_\ell(N)}
            \#\{ \vect{n} \in [N]^\ell \text{ distinct}: \ Q_{\vect{a}}(\vect{n})=0 \}.
        \end{align*}
        By an inductive argument  on the different correlation orders we can take $\vert a_i\vert >0$ for all $i\leq \ell$. In fact, this is where the condition that the $x_i$ be distinct appears, if not, the lower order correlations with some of the $a_i$ being $0$ cannot be bounded by $O(N^{-\vep})$ (see \cite[Section 3]{LT1} for a detailed explanation of this argument).
        Given $\vect{a} \in \cA_{\ell}(N)$, let $0\le m(\vect{a})\le \ell/2$ denote the largest number of disjoint distinct pairs $(i,j)$ such that $-a_i a_j \in \mathbb{Z}^{[d]}$. Further, let 
    \begin{align*}
        \cA_{\ell}(N,m) : = \{\vect{a} \in \cA_{\ell}(N): 
        \ m(\vect{a}) = m , 
        \ |a_i| >0\, \,
        \forall_{1\leq i\leq \ell} \}.
    \end{align*}
    Then, by Theorem \ref{thm:main-counting},
    \begin{align*}
        \cE_m:=\sum_{\vect{a} \in \cA_{\ell}(N,m)}  \#\{ \vect{n} \in [N]^\ell \text{ distinct}: 
        Q_{\vect{a}}(\vect{n})=0 \} \ll 
        N^{\phi(m,d,\ell)+\vep} \# 
        \cA_{\ell}(N,m).
    \end{align*}
    By a result of Tolev~\cite{tolev} (see also \cite[Thm. 2.2]{MR4345823}) the number of ways to choose $a_1,a_2 \leq N^{1+\vep}$ so that $-a_1a_2 \in \mathbb{Z}^{[d]}$ is $O(N^{\frac{2}{d}+\vep} (\log N)^{d-1})$ hence we save a factor of $N^{2- \frac{2}{d}}$. Moreover, recall that $a_\ell$ is determined once the first 
    $\ell-1$ components of $\vect{a}$ are fixed. Hence,
    \begin{align*}
        |\cA_{\ell}(N,m)| \ll 
        \begin{cases}
         N^{\ell-1 - m(2-2/d)+\vep} & \text{ if } 0\le m < \ell/2        \\
        N^{\ell-1/d+\vep}  & \text{ if } m=\ell/2.  
        \end{cases}
    \end{align*}
    Thus
    \begin{align*}
        \cE &\ll \frac{1}{N^{\ell}}\sum_{m=0}^{\ell/2} \cE_m   \\
            &\ll \frac{N^{\vep}}{N^{\ell}} \sum_{m=0}^{\ell/2} N^{\phi(m,d,\ell)} |\cA_{\ell}(N,m)|\\
            &\ll\frac{N^{\vep}}{N^{\ell}}\bigg( N^{\phi(0,d,\ell) + \ell-1} + N^{\ell-1/d} + \sum_{m=1}^{\lfloor\ell/2-1 \rfloor} N^{\phi(m,d,\ell)} N^{\ell-m(2-2/d)-1} \bigg).
    \end{align*}
    Note that, for $m$ fixed
    \begin{align*}
        \phi(m,d,\ell) + \ell - m(2-2/d) -1  &\le \max_{0\le s \le m} \left(s-m  \right) +\ell-1 +\sum_{r=1}^{\ell-1} \frac{r+1}{\sqrt[r]{L(d,n)}}\\
        &\le \ell-1 +\sum_{r=1}^{\ell-1} \frac{r+1}{\sqrt[r]{L(d,n)}} = \phi(0,d,\ell)+\ell-1.
    \end{align*}
    Hence,
    \begin{align*}
        \cE \ll N^{\phi(0,d,\ell)-1} + N^{-1/d+\vep}.
    \end{align*}
    We conclude that the expected value converges whenever 
    \begin{align}\label{eq phi at most one}
        \phi(0,d,\ell) < 1.
    \end{align}

    Turning now to the variance, the proof is very similar. Using \eqref{eq expectation}, 
    the second moment equals
    \begin{align*}
        \int_{0}^{1} (R_{\ell}^N(\cX_{\alpha}^d,f))^2 
            \, d\alpha
        &=
        \frac{1}{N^{2\ell}} \int_0^1\left|\sum_{\vect{n}\in [N]^\ell}^{\ast} 
            \sum_{\vect{a}\in\Z^{\ell-1}}   \wh{f} \bigg(\frac{\vect{a}}{N}\bigg) e(\alpha (\vect{a}\cdot\vect{n}^d))\right|^2\rd \alpha\\
            &= \expect{f}^2 + \frac{1}{N^{2\ell}} \int_0^1\left|\sum_{\vect{n}\in [N]^\ell}^{\ast}
            \sum_{\vect{a}\in\cA_{\ell}}   \wh{f} \bigg(\frac{\vect{a}}{N}\bigg) e(\alpha (\vect{a}\cdot\vect{n}^d))\right|^2\rd \alpha + O_{f,\varepsilon}(N^{-\vep}).
    \end{align*}
    Expanding the square
    and computing 
    the integral in $\alpha$ leaves us to bound 
    \begin{align}\label{var E}
        \cE : = \frac{1}{N^{2\ell}} \sum_{\vect{a}\in \cA_\ell} \sum_{\vect{b}\in \cA_\ell} \#\{\vect{m},\vect{n}\in [N]^{\ell,\ast} \ : \ Q(\vect{a},\vect{b},\vect{m},\vect{n}) = 0 \},
    \end{align}
    where $[N]^{\ell,\ast}$ is the set of $[N]^{\ell}$ with distinct entries and $Q(\vect{a},\vect{b},\vect{m},\vect{n}) = Q(\vect{a},\vect{m}) - Q(\vect{b},\vect{n})$.

    If we apply the same estimates
    leading up to \eqref{eq phi at most one},
    we arrive at
    \begin{align*}
        \cE \ll N^{-2/d+\vep} + N^{\phi(0,d,2\ell)-2+\vep}.
    \end{align*}
    Note that, since $d > d_{\ell}$ we have $\phi(0,d,2\ell)<2$.

    \end{proof}

    \section{Proof of Theorem \ref{thm Hausdorff correlation}}
    \label{s:fractal}

    The following lemmas allow us to control how much the $\mu$-integration of a trigonometric polynomial
    deviates from the Lebesgue integration $[0,1]$.
    In this section, we use the shorthand notation 
    $L^{p}=L^{p}([0,1])$ for $p\geq 1$.

    \begin{lemma}\label{le Cauchy bound}
     Let 
     $
     T(\xi)=\sum_{\vert u \vert \leq U} 
     c_u e(-\xi u)
     $ be a trigonometric polynomial 
     of degree $U$ where all $c_u\in \C$.
    Let $\mu$ be as in Theorem \ref{thm Hausdorff correlation}.
    Then for all $\rho\in (0,1),$
    \begin{equation}\label{eq energy bound}
    \int_{0}^{1} T(\xi)\, d \mu(\xi) 
    \ll_{\rho,\mu} U^{\rho}\|T\|_{L^2}.
    \end{equation}
    \end{lemma}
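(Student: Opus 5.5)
Expand $T$ and integrate term by term against $\mu$. By the definition $\widehat{\mu}(\xi)=\int_0^1 e(-\xi\alpha)\,d\mu(\alpha)$,
\[
\int_0^1 T(\xi)\,d\mu(\xi)=\sum_{|u|\le U} c_u \int_0^1 e(-\xi u)\,d\mu(\xi)=\sum_{|u|\le U} c_u\,\widehat{\mu}(u).
\]
The zero frequency contributes $c_0\widehat{\mu}(0)$. This is bounded by $|\widehat{\mu}(0)|\,\|T\|_{L^2}$, because Parseval gives $\|T\|_{L^2}^2=\sum_{|u|\le U}|c_u|^2$. So it suffices to treat the sum over $1\le |u|\le U$.

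On the nonzero frequencies we insert the weight $|u|^{(s-1)/2}$ and its inverse, then apply Cauchy--Schwarz:
\[
\Big|\sum_{0<|u|\le U} c_u\widehat{\mu}(u)\Big|\le \Big(\sum_{0<|u|\le U}|c_u|^2|u|^{1-s}\Big)^{1/2}\Big(\sum_{u\neq0}|\widehat{\mu}(u)|^2|u|^{s-1}\Big)^{1/2}.
\]
The second factor is finite by hypothesis \eqref{eq series converges}, and the resulting constant depends only on $\mu$ and $s$. In the first factor we use $|u|^{1-s}\le U^{1-s}$, which yields $U^{(1-s)/2}\|T\|_{L^2}$ by Parseval.

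Given $\rho\in(0,1)$, choose $s=1-2\rho$ when $\rho<1/2$. For $\rho\ge 1/2$ the bound for any smaller $\rho$ already implies the claim, since $U\ge 1$. Then $U^{(1-s)/2}=U^{\rho}$, and combining with the zero-frequency term gives \eqref{eq energy bound}.

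The only point needing care is that the hypothesis must be applied with this particular $s\in(0,1)$. The statement of Theorem \ref{thm Hausdorff correlation} is read as requiring the series to converge for every such $s$ (or for all $s$ arbitrarily close to $1$), which is what the Frostman/energy argument supplies. The implied constant therefore depends on $\rho$ through the choice of $s$.
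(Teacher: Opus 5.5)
Your proposal is correct and follows the paper's argument: expand $\int_0^1 T\,d\mu=\sum_{|u|\le U}c_u\widehat{\mu}(u)$, apply Cauchy--Schwarz with the weights $|u|^{\pm(1-s)/2}$, bound the first factor by $U^{1-s}\|T\|_{L^2}^2$ via Parseval and the second by the hypothesis \eqref{eq series converges}, and take $s=1-2\rho$. Your separate treatment of the zero frequency, where $|u|^{s-1}$ is undefined, and of the range $\rho\ge 1/2$, where $1-2\rho\notin(0,1)$, settles two points that the paper's proof passes over silently.
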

    \begin{proof}
Notice, that for $0<s < 1$ we have
$$
\int_{0}^{1} T(\xi)\, d \mu(\xi)=\sum_{|u| \leq U} c_u \widehat{\mu}(u)
=\sum_{|u| \leq U} c_u 
u^{\frac{1-s}{2}} \widehat{\mu}(u) u^{\frac{s-1}{2}}.
$$
By the Cauchy--Schwarz inequality in $\R^{2U+1}$,
$$
\Bigg\vert 
\int_{0}^{1} T(\xi)\, d \mu(\xi) 
\Bigg\vert^2 \leq\left(\sum_{|u| \leq U}
\left|c_u\right|^2 
u^{1-s}\right)
\cdot 
\left(\sum_{|u| \leq U}|
\widehat{\mu}(u)|^2
\vert u\vert^{s-1}\right).
$$
The summation in the first bracket 
is at most $U^{1-s}\|T\|_{L^2}^2$.
By \eqref{eq series converges} 
the summation in the second 
bracket is at most a constant $C_{s,\mu}>0$.
Choosing 
$s=1-2\rho$ and taking roots
completes the proof.
\end{proof}
We now prove a slightly more general
statement than needed
as we believe the transference mechanism
is of independent interest. 

\begin{lemma}\label{le transfer fractal}
Let $C>1$ be fixed.
For each integer $N\geq 1$,
let 
$T_N: [0,1] \rightarrow \C$
be a trigonometric polynomial of 
degree $U_N=O(N^C)$.
Assume $\int_{0}^{1} T_N(\alpha)\, d\alpha =0$ and
suppose there exists an
$\varepsilon>0$ so that
\begin{equation}\label{eq fourth assum}
    \int_0^1
    \vert T_N(\alpha)\vert^4 
    d \alpha \ll_\varepsilon N^{-2\varepsilon}.
\end{equation}
Then, there exists an increasing
sequence of integers $(N_m)_m$ with 
$N_m\sim m^{3/\varepsilon}$ such that
\begin{equation}\label{eq limit on subsequence}
\lim_{m\rightarrow \infty}
T_{N_m}(\alpha) = 0
\end{equation}
for $\mu$-almost all $\alpha\in[0,1]$ where 
$\mu$ is as in 
Theorem \ref{thm Hausdorff correlation}.
\end{lemma}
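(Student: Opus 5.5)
The plan is a Borel--Cantelli argument with respect to $\mu$, fed by Lemma \ref{le Cauchy bound} applied to $|T_N|^2$. Since $T_N$ is a trigonometric polynomial of degree $U_N$, the function $|T_N|^2 = T_N\overline{T_N}$ is again a trigonometric polynomial, of degree at most $2U_N = O(N^C)$. Lemma \ref{le Cauchy bound} with a small parameter $\rho\in(0,1)$, to be fixed later, gives
\begin{equation*}
\int_0^1 |T_N(\alpha)|^2\, d\mu(\alpha) \ll_{\rho,\mu} U_N^{\rho}\, \big\| |T_N|^2 \big\|_{L^2} = U_N^{\rho} \Big(\int_0^1 |T_N(\alpha)|^4 d\alpha\Big)^{1/2} \ll N^{C\rho-\varepsilon}.
\end{equation*}
Using the fourth moment here, rather than bounding $\int T_N\,d\mu$ directly, is deliberate: it yields a nonnegative quantity controlled in $\mu$-mean, which is what Chebyshev needs. (The mean-zero hypothesis is not needed in this step; it is what makes the $L^4$ bound plausible in applications.)

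Next choose $\rho$ so that $C\rho \le \varepsilon/3$, for instance $\rho = \varepsilon/(3C)$. This gives $\int |T_N|^2 d\mu \ll N^{-2\varepsilon/3}$. Now set $N_m := \lfloor m^{3/\varepsilon}\rfloor$, which is increasing for large $m$ and satisfies $N_m \sim m^{3/\varepsilon}$. Then
\begin{equation*}
\int_0^1 |T_{N_m}|^2\, d\mu \ll m^{-2},
\end{equation*}
so $\sum_m \int |T_{N_m}|^2 d\mu < \infty$. By monotone convergence, $\sum_m |T_{N_m}(\alpha)|^2 < \infty$ for $\mu$-almost every $\alpha$. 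Hence $T_{N_m}(\alpha)\to 0$ $\mu$-almost everywhere, which is \eqref{eq limit on subsequence}. One could equally run Chebyshev with a threshold $\delta$ and Borel--Cantelli, then take $\delta\to0$ along a countable set; the summability route is just cleaner.

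The only delicate point is the bookkeeping of exponents. The loss $U^{\rho}$ from Lemma \ref{le Cauchy bound} must be absorbed by the $L^4$ saving $N^{-\varepsilon}$, which works because $U_N$ is only polynomial in $N$ and $\rho$ can be taken arbitrarily small. What remains must then give a summable rate along a polynomially sparse subsequence; the exponent $3/\varepsilon$ leaves margin, since any exponent exceeding $3/(2\varepsilon)$ would do. Two minor checks: Lemma \ref{le Cauchy bound} applies to complex coefficients, which covers $|T_N|^2$, and $\mu$ is finite, so constants are uniform.
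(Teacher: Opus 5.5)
Your proof is correct and follows the paper's argument: you apply Lemma \ref{le Cauchy bound} to the trigonometric polynomial $|T_N|^2$ of degree at most $2U_N$, use the $L^4$ hypothesis to get $\int_0^1 |T_N|^2\,d\mu \ll N^{C\rho-\varepsilon}$, and then use summability along $N_m\sim m^{3/\varepsilon}$. The paper also proves $\int_0^1 T_N\,d\mu=o(1)$, which is not needed, and it passes through Chebyshev with threshold $N^{-\varepsilon/10}$ followed by Borel--Cantelli; your direct summation of $\int_0^1|T_{N_m}|^2\,d\mu$ with monotone convergence is equivalent, but cleaner, and it avoids the threshold-exponent bookkeeping.
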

\begin{proof}
First, we show 
$\int_{0}^{1} T_N(\alpha)\, d\mu(\alpha)=o(1)$. 
Let
$$
T_N(\alpha)=\sum_{ \vert u \vert \leq U_N} 
     c_{u,N} e(-\alpha u).
$$
The Cauchy--Schwarz 
inequality and 
\eqref{eq fourth assum} imply
$
\Vert T_N(\alpha) \Vert_{L^2}
\leq 
\Vert T_N(\alpha)\Vert_{L^4}^{1/2}
\ll_\varepsilon N^{-\varepsilon}
$.
Thus, \eqref{eq energy bound} yields
for any $\rho\in(0,1)$ that

$$
\int_{0}^{1} T_N(\alpha)\, d \mu(\alpha) 
    \ll_{\rho,\mu} 
    N^{C \rho}\|T_N\|_{L^2}
    \ll N^{C \rho} N^{-\varepsilon}.
$$
Upon choosing $\rho\in (0,\varepsilon/C)$,
we deduce $\int_{0}^{1} T_N(\alpha)\, d\mu(\alpha)=o(1)$.
Next, we upper-bound the $\mu$-measure of
$P_N=
\{\alpha \in [0,1]:
\vert T_N(\alpha)   \vert > 
N^{-\frac{\varepsilon}{10}}\}
$.
To this end, we write 
$$
\vert T_N(\alpha)\vert^2= 
\sum_{ \vert u \vert \leq 2U_N} 
     k_{u,N} e(-\alpha u)
     \quad \mathrm{where}\quad 
     k_{u,N}=\sum_{\vert v\vert \leq u}
     c_{v,N}\cdot  \overline{c_{u-v,N}}.
$$
Applying \eqref{eq energy bound} 
with the trigonometric 
polynomial $\vert \widetilde{T}_N(\alpha)\vert^2$
and using \eqref{eq fourth assum}
produces the estimate
$$
\int_{0}^{1} 
\vert T_N(\alpha)\vert^2\, d \mu(\alpha)
\ll_{\rho,\mu} 
(2U)^{\rho} \|T_N\|_{L^4}^2
\ll_\mu N^{C\rho} N^{-\varepsilon}.
$$
Upon choosing $\rho$ small, 
we see that the right 
hand side is 
$O_{\varepsilon}(N^{-\varepsilon/2})$.
Chebyshev's inequality yields 
$\mu(P_N)\ll_{\varepsilon} N^{-\varepsilon/2}$.
The convergence Borel--Cantelli lemma 
shows that the limsup set $P$ of the $P_{N_m}$
is a $\mu$-null set because 
$$
\sum_{m\geq 1} \mu(P_{N_m})\ll 
\sum_{m\geq 1} N_m^{-\varepsilon/2}\ll 
\sum_{m\geq 1} m^{-3/2}
$$
converges. Taking $\cG= S\setminus P$
and noting that \eqref{eq limit on subsequence}
holds for any $\alpha \in \cG$ completes the proof.
\end{proof}
Now we are in a position to prove Theorem \ref{thm Hausdorff correlation}.
\begin{proof}[Proof of Theorem \ref{thm Hausdorff correlation}.]
By the Fourier series expansion 
\eqref{eq Fourier expansion},
we have 
$$
R_{\ell}^N(\cX_\alpha^d, f)= 
\expect{f} +  c_{0,N} + O(N^{-\varepsilon})
            + \sum_{u\neq 0}
            c_{u,N} e(-u\alpha)
\,\,
\mathrm{where}
\,\,
 c_{u,N}:=\frac{1}{N^\ell}\sum_{\vect{n}\in [N]^{\ell}}^{\ast}
        \sum_{\vect{a}\in\Z^{{\ell}-1}\setminus\{\vect{0}\}}
         \wh{f} \bigg(\frac{\vect{a}}{N}\bigg)
        1(Q_{\vect{a}}(\vect{n}))=u).
$$
By \eqref{eq expectation}, we know
$c_{0,N}=o(1)$.
Let $U_N=N^{1+d+\varepsilon}$ and 
$$
T_N= \sum_{1 \leq \vert u \vert \leq U_N}
            c_{u,N} e(-u\alpha).
$$
Then the rapid decay of $\widehat{f}$ implies
$$
R_{{\ell}}^N(\cX_\alpha^d, f) = 
\expect{f} +  c_{0,N} + O(N^{-\varepsilon})
+ T_N + O(N^{-\vep}).
$$
To use Lemma \ref{le transfer fractal},
we bound 
$\int_0^1
    \vert T_N(\alpha)\vert^4 
    d \alpha$.
    
Indeed,
$$
    \int_0^1
    \vert T_N(\alpha) \vert^4 
    d \alpha\ll \frac{1}{N^{4{\ell}}}\sum_{j\leq 4}
            \sum_{\vect{n}_j\in [N]^{\ell}}^{\ast}\,
        \sum_{\substack{
        \vect{a}_j\in\Z^{{\ell}}\setminus\{\vect{0}\}\\ 
        \Vert \vect{a}_j \Vert_\infty \leq  N^{1+\vep}}}
        1\big(Q_{\vect{a}_1}(\vect{n}_1) - Q_{\vect{a}_2}(\vect{n}_2)=
        Q_{\vect{a}_3}(\vect{n}_3) - Q_{\vect{a}_4}(\vect{n}_4)\big).
$$
The counting condition is now relaxed to 
count non-zero $\vect{b}\in \Z^{4{\ell}}$ 
with $b_{4\ell}=b_1+\ldots+b_{4\ell-1}$,
and $\vect{m}\in [N]^{4{\ell}}$ 
so that $Q_{\vect{b}}(\vect{m})=0$
while $\Vert \vect{b}\Vert_\infty \leq 
N^{1+\vep}$, and $\Vert \vect{m} \Vert_\infty \leq 
N$.
Because $d>d_{4{\ell}}$,
we obtain this information from 
\eqref{eq expectation}.
The upshot, by a typical Borel--Cantelli and Chebyshev argument, is that 
for $\mu$-almost any $\alpha\in[0,1]$ we have
$R_{{\ell}}^{N_m}(\cX_\alpha^d,f)
\rightarrow \expect{f}$
as $m\rightarrow\infty$,
for any fixed $(N_m)_m$ 
with $N_m\sim m^{3/\varepsilon}$. 
Upgrading the convergence along a sub-lacunary subsequence
to convergence as $N\rightarrow\infty$
is a routine sandwiching argument, 
see \cite[Lemma 7.2]{TechnauYesha2020}.
\end{proof}

\section*{Acknowledgments} This work was supported by a grant from the Simons Foundation [SFI-MPS-TSM-00013410,CL]. NR is funded by FWF project ESP 441-NBL. NT was supported by the University of Wisconsin–Madison, Office of 
the Vice Chancellor for Research with funding from the Wisconsin Alumni Research Foundation.
We are grateful to Matteo Verzobio for helpful conversations regarding Theorem \ref{thm:main-counting}.
We thank Tim Browning, Jens Marklof, Zeev Rudnick, 
Per Salberger, Peter Sarnak, Igor Shparlinski, and Trevor Wooley for comments on early drafts.

  \bibliographystyle{alpha}
  \bibliography{biblio}

@article{KMW2026,
  title={Values of ternary quadratic forms at integers and the {B}erry--{T}abor conjecture for 3-tori},
  author={Kim, W. and Marklof, J. and Welsh, M.},
  journal={Preprint, arXiv:2601.03209},
}

@article {P,
    AUTHOR = {Pellegrinotti, A.},
     TITLE = {Evidence for the {P}oisson distribution for quasi-energies in
              the quantum kicked-rotator model},
   JOURNAL = {J. Statist. Phys.},
  FJOURNAL = {Journal of Statistical Physics},
    VOLUME = {53},
      YEAR = {1988},
    NUMBER = {5-6},
     PAGES = {1327--1336},
      ISSN = {0022-4715,1572-9613},
   MRCLASS = {28D05 (58F11 81C99 82A99)},
  MRNUMBER = {980178},
MRREVIEWER = {Demetris\ P. K. Ghikas},
       DOI = {10.1007/BF01023872},
       URL = {https://doi-org.ezproxy.lib.uh.edu/10.1007/BF01023872},
}

@incollection {S,
    AUTHOR = {Sinai, Ya. G.},
     TITLE = {The absence of the {P}oisson distribution for spacings between
              quasi-energies in the quantum kicked-rotator model},
      NOTE = {Progress in chaotic dynamics},
   JOURNAL = {Phys. D},
  FJOURNAL = {Physica D. Nonlinear Phenomena},
    VOLUME = {33},
      YEAR = {1988},
    NUMBER = {1-3},
     PAGES = {314--316},
      ISSN = {0167-2789,1872-8022},
   MRCLASS = {28D05 (58F11 81C99 82A99)},
  MRNUMBER = {984625},
MRREVIEWER = {Demetris\ P. K. Ghikas},
       DOI = {10.1016/S0167-2789(98)90024-0},
       URL = {https://doi-org.ezproxy.lib.uh.edu/10.1016/S0167-2789(98)90024-0},
}

@article {RZ,
    AUTHOR = {Rudnick, Z. and Zaharescu, A.},
     TITLE = {The distribution of spacings between fractional parts of
              lacunary sequences},
   JOURNAL = {Forum Math.},
  FJOURNAL = {Forum Mathematicum},
    VOLUME = {14},
      YEAR = {2002},
    NUMBER = {5},
     PAGES = {691--712},
}

@article{MOS,
author = {Mohammadi, A. and Ostafe, A. and Shparlinski, I.},
title = {On some matrix counting problems},
journal = {Journal of the London Mathematical Society},
volume = {110},
number = {6},
pages = {e70044},
year = {2024}
}

@article {BL,
    AUTHOR = {Blomer, V. and Li, J.},
     TITLE = {Correlations of values of random diagonal forms},
   JOURNAL = {Int. Math. Res. Not. IMRN},
  FJOURNAL = {International Mathematics Research Notices. IMRN},
      YEAR = {2023},
    NUMBER = {23},
     PAGES = {20296--20336},
}

@article {RS2024,
    AUTHOR = {Radziwi\l\l \,, M. and Shubin, A.},
     TITLE = {Poissonian pair correlation for {$\alpha n^\theta \bmod 1$}},
   JOURNAL = {Int. Math. Res. Not. IMRN},
  FJOURNAL = {International Mathematics Research Notices. IMRN},
      YEAR = {2024},
    NUMBER = {9},
     PAGES = {7654--7679},
}

@article {ABR,
    AUTHOR = {Aistleitner, C. and Blomer, V. and Radziwi\l\l,
              M.},
     TITLE = {Triple correlation and long gaps in the spectrum of flat tori},
   JOURNAL = {J. Eur. Math. Soc. (JEMS)},
  FJOURNAL = {Journal of the European Mathematical Society (JEMS)},
    VOLUME = {26},
      YEAR = {2024},
    NUMBER = {1},
     PAGES = {41--74},
}

@article {LST,
    AUTHOR = {Lutsko, C. and Sourmelidis, A. and Technau,
              N.},
     TITLE = {Pair correlation of the fractional parts of {$\alpha
              n^{\theta}$}},
   JOURNAL = {J. Eur. Math. Soc. (JEMS)},
  FJOURNAL = {Journal of the European Mathematical Society (JEMS)},
    VOLUME = {27},
      YEAR = {2025},
    NUMBER = {10},
     PAGES = {4069--4082},
}

@article {MR1438823,
    AUTHOR = {Vaughan, R. C. and Wooley, T. D.},
     TITLE = {A special case of {V}inogradov's mean value theorem},
   JOURNAL = {Acta Arith.},
  FJOURNAL = {Acta Arithmetica},
    VOLUME = {79},
      YEAR = {1997},
    NUMBER = {3},
     PAGES = {193--204},
      ISSN = {0065-1036,1730-6264},
   MRCLASS = {11D72 (11L15 11P05)},
  NOOPMRNUMBER = {1438823},
MRREVIEWER = {D.\ R.\ Heath-Brown},
       NOOPDOI = {10.4064/aa-79-3-193-204},
       NOOPURL = {https://doi.org/10.4064/aa-79-3-193-204},
}

@article {MR4345823,
    AUTHOR = {de la Bret\`eche, R. and Kurlberg, P. and Shparlinski,
              I.},
     TITLE = {On the number of products which form perfect powers and
              discriminants of multiquadratic extensions},
   JOURNAL = {Int. Math. Res. Not. IMRN},
  FJOURNAL = {International Mathematics Research Notices. IMRN},
      YEAR = {2021},
    NUMBER = {22},
     PAGES = {17140--17169},
      ISSN = {1073-7928,1687-0247},
   MRCLASS = {11N37 (11N45 11R20)},
  NOOPMRNUMBER = {4345823},
MRREVIEWER = {Frank\ Henry\ Thorne},
       NOOPDOI = {10.1093/imrn/rnz316},
       NOOPURL = {https://doi.org/10.1093/imrn/rnz316},
}

@article {tolev,
    AUTHOR = {Tolev, D. I.},
     TITLE = {On the number of pairs of positive integers {$x_1,x_2\leq H$}
              such that {$x_1x_2$} is a {$k$}-th power},
   JOURNAL = {Pacific J. Math.},
  FJOURNAL = {Pacific Journal of Mathematics},
    VOLUME = {249},
      YEAR = {2011},
    NUMBER = {2},
     PAGES = {495--507},
      ISSN = {0030-8730,1945-5844},
   MRCLASS = {11D45 (11N37)},
  NOOPMRNUMBER = {2782682},
MRREVIEWER = {D.\ R.\ Heath-Brown},
       NOOPDOI = {10.2140/pjm.2011.249.495},
       NOOPURL = {https://doi.org/10.2140/pjm.2011.249.495},
}

@article {LT1,
    AUTHOR = {Lutsko, C. and Technau, N.},
     TITLE = {{$m$}-point correlations of the fractional parts of {$\alpha
              n^\theta$}},
   JOURNAL = {Amer. J. Math.},
  FJOURNAL = {American Journal of Mathematics},
    VOLUME = {148},
      YEAR = {2026},
    NUMBER = {2},
     PAGES = {569--597},
      ISSN = {0002-9327,1080-6377},
   MRCLASS = {11K31 (11L07 42B10)},
  NOOPMRNUMBER = {5054173},
}

@article {LT2,
    AUTHOR = {Lutsko, C. and Technau, N.},
     TITLE = {Full {P}oissonian local statistics of slowly growing
              sequences},
   JOURNAL = {Compos. Math.},
  FJOURNAL = {Compositio Mathematica},
    VOLUME = {161},
      YEAR = {2025},
    NUMBER = {1},
     PAGES = {148--180},
}

@article {AE-BM,
   AUTHOR = {Aistleitner, C. and El-Baz, D. and Munsch, M.},
   TITLE = {A pair correlation problem, and counting lattice points with the zeta function},
   JOURNAL = {Geom. Funct. Anal.},
   PAGES = {483--512},
   YEAR = {2021},
   VOLUME = {31},
   NUMBER = {3},
}

@article {AistleitnerLarcherLewko2017,
    AUTHOR = {Aistleitner, C. and Larcher, G. and Lewko, M.},
     TITLE = {Additive energy and the {H}ausdorff dimension of the
              exceptional set in metric pair correlation problems},
      NOTE = {With an appendix by Jean Bourgain},
   JOURNAL = {Israel J. Math.},
  FJOURNAL = {Israel Journal of Mathematics},
    VOLUME = {222},
      YEAR = {2017},
    NUMBER = {1},
     PAGES = {463--485},
}

@article {HB,
    AUTHOR = {Heath-Brown, D. R.},
     TITLE = {Sums and differences of three {$k$}th powers},
   JOURNAL = {J. Number Theory},
  FJOURNAL = {Journal of Number Theory},
    VOLUME = {129},
      YEAR = {2009},
    NUMBER = {6},
     PAGES = {1579--1594},
      ISSN = {0022-314X,1096-1658},
   MRCLASS = {11D45 (11E76 11G35 11N32)},
  NOOPMRNUMBER = {2521494},
MRREVIEWER = {Rainer\ Dietmann},
       NOOPDOI = {10.1016/j.jnt.2009.01.012},
       NOOPURL = {https://doi.org/10.1016/j.jnt.2009.01.012},
}

@article {Marmon,
    AUTHOR = {Marmon, O.},
     TITLE = {Sums and differences of four {$k$}th powers},
   JOURNAL = {Monatsh. Math.},
  FJOURNAL = {Monatshefte f\"ur Mathematik},
    VOLUME = {164},
      YEAR = {2011},
    NUMBER = {1},
     PAGES = {55--74},
      ISSN = {0026-9255,1436-5081},
   MRCLASS = {11D45 (11D41 11P05 14G05)},
  NOOPMRNUMBER = {2827172},
MRREVIEWER = {D.\ R.\ Heath-Brown},
       NOOPDOI = {10.1007/s00605-010-0248-2},
       NOOPURL = {https://doi.org/10.1007/s00605-010-0248-2},
}

@article {PLMS,
    AUTHOR = {Salberger, P.},
     TITLE = {Counting rational points on projective varieties},
   JOURNAL = {Proc. Lond. Math. Soc. (3)},
  FJOURNAL = {Proceedings of the London Mathematical Society. Third Series},
    VOLUME = {126},
      YEAR = {2023},
    NUMBER = {4},
     PAGES = {1092--1133},
      ISSN = {0024-6115,1460-244X},
   MRCLASS = {11D45 (11D72 11G35 14G05 14G40)},
  NOOPMRNUMBER = {4574827},
MRREVIEWER = {Paul\ M.\ Voutier},
       NOOPDOI = {10.1112/plms.12508},
       NOOPURL = {https://doi.org/10.1112/plms.12508},
}

@article {SW,
    AUTHOR = {Salberger, P. and Wooley, T.D.},
     TITLE = {Rational points on complete intersections of higher degree,
              and mean values of {W}eyl sums},
   JOURNAL = {J. Lond. Math. Soc. (2)},
  FJOURNAL = {Journal of the London Mathematical Society. Second Series},
    VOLUME = {82},
      YEAR = {2010},
    NUMBER = {2},
     PAGES = {317--342},
      ISSN = {0024-6107,1469-7750},
   MRCLASS = {11D45 (11D41 11D72 11G35 11L15 11P55 14G05)},
  NOOPMRNUMBER = {2725042},
MRREVIEWER = {D.\ R.\ Heath-Brown},
       NOOPDOI = {10.1112/jlms/jdq027},
       NOOPURL = {https://doi.org/10.1112/jlms/jdq027},
}

@article {BHB,
    AUTHOR = {Browning, T. D. and Heath-Brown, D. R.},
     TITLE = {The density of rational points on non-singular hypersurfaces.
              {II}},
      NOTE = {With an appendix by J. M. Starr},
   JOURNAL = {Proc. London Math. Soc. (3)},
  FJOURNAL = {Proceedings of the London Mathematical Society. Third Series},
    VOLUME = {93},
      YEAR = {2006},
    NUMBER = {2},
     PAGES = {273--303},
      ISSN = {0024-6115,1460-244X},
   MRCLASS = {11D45 (11G35 14G05)},
  NOOPMRNUMBER = {2251154},
MRREVIEWER = {Clayton\ Petsche},
       NOOPDOI = {10.1112/S0024611506015784},
       NOOPURL = {https://doi.org/10.1112/S0024611506015784},
}

@article{EMM2005,
  title={Quadratic forms of signature (2, 2) and eigenvalue spacings on rectangular 2-tori},
  author={Eskin, A. and Margulis, G. and Mozes, S.},
  journal={Ann. of Math.},
  volume={161},
  number={2},
  pages={679--725},
  year={2005},
  publisher={JSTOR}
}

@article {paucityvmvt,
    AUTHOR = {Wooley, T.D.},
     TITLE = {Paucity problems and some relatives of {V}inogradov's mean
              value theorem},
   JOURNAL = {Math. Proc. Cambridge Philos. Soc.},
  FJOURNAL = {Mathematical Proceedings of the Cambridge Philosophical
              Society},
    VOLUME = {175},
      YEAR = {2023},
    NUMBER = {2},
     PAGES = {327--343},
      ISSN = {0305-0041,1469-8064},
   MRCLASS = {11D45 (11P05)},
  NOOPMRNUMBER = {4623518},
MRREVIEWER = {Boqing\ Xue},
       NOOPDOI = {10.1017/S0305004123000166},
       NOOPURL = {https://doi.org/10.1017/S0305004123000166},
}

@article {globalfields,
    AUTHOR = {Paredes, M. and Sasyk, R.},
     TITLE = {Uniform bounds for the number of rational points on varieties
              over global fields},
   JOURNAL = {Algebra Number Theory},
  FJOURNAL = {Algebra \& Number Theory},
    VOLUME = {16},
      YEAR = {2022},
    NUMBER = {8},
     PAGES = {1941--2000},
      ISSN = {1937-0652,1944-7833},
   MRCLASS = {11D45 (11G35 11G50 14G05)},
  NOOPMRNUMBER = {4516199},
MRREVIEWER = {Davide\ Lombardo},
       NOOPDOI = {10.2140/ant.2022.16.1941},
       NOOPURL = {https://doi.org/10.2140/ant.2022.16.1941},
}

@article {BP,
    AUTHOR = {Bombieri, E. and Pila, J.},
     TITLE = {The number of integral points on arcs and ovals},
   JOURNAL = {Duke Math. J.},
  FJOURNAL = {Duke Mathematical Journal},
    VOLUME = {59},
      YEAR = {1989},
    NUMBER = {2},
     PAGES = {337--357},
      ISSN = {0012-7094,1547-7398},
   MRCLASS = {11P21 (11D99)},
  NOOPMRNUMBER = {1016893},
MRREVIEWER = {Ulrich\ Rausch},
       NOOPDOI = {10.1215/S0012-7094-89-05915-2},
       NOOPURL = {https://doi.org/10.1215/S0012-7094-89-05915-2},
}

@article {salberger2,
    AUTHOR = {Salberger, P.},
     TITLE = {Counting rational points on hypersurfaces of low dimension},
   JOURNAL = {Ann. Sci. \'Ecole Norm. Sup. (4)},
  FJOURNAL = {Annales Scientifiques de l'\'Ecole Normale Sup\'erieure.
              Quatri\`eme S\'erie},
    VOLUME = {38},
      YEAR = {2005},
    NUMBER = {1},
     PAGES = {93--115},
      ISSN = {0012-9593},
   MRCLASS = {14G05 (11G35)},
  NOOPMRNUMBER = {2136483},
MRREVIEWER = {Tam\'as\ Szamuely},
       NOOPDOI = {10.1016/j.ansens.2004.10.005},
       NOOPURL = {https://doi.org/10.1016/j.ansens.2004.10.005},
}

@article {paucitypairs,
    AUTHOR = {Br\"udern, J. and Wooley, T. D.},
     TITLE = {The paucity problem for certain pairs of diagonal equations},
   JOURNAL = {Q. J. Math.},
  FJOURNAL = {The Quarterly Journal of Mathematics},
    VOLUME = {54},
      YEAR = {2003},
    NUMBER = {1},
     PAGES = {41--48},
      ISSN = {0033-5606,1464-3847},
   MRCLASS = {11D45 (11D85 11P05 11P55)},
  NOOPMRNUMBER = {1967068},
MRREVIEWER = {Scott\ T.\ Parsell},
       NOOPDOI = {10.1093/qjmath/54.1.41},
       NOOPURL = {https://doi.org/10.1093/qjmath/54.1.41},
}

@article {paucityskinner,
    AUTHOR = {Skinner, C. M. and Wooley, T. D.},
     TITLE = {On the paucity of non-diagonal solutions in certain diagonal
              {D}iophantine systems},
   JOURNAL = {Quart. J. Math. Oxford Ser. (2)},
  FJOURNAL = {The Quarterly Journal of Mathematics. Oxford. Second Series},
    VOLUME = {48},
      YEAR = {1997},
    NUMBER = {190},
     PAGES = {255--277},
      ISSN = {0033-5606,1464-3847},
   MRCLASS = {11D41 (11G35)},
  NOOPMRNUMBER = {1458583},
MRREVIEWER = {D.\ R.\ Heath-Brown},
       NOOPDOI = {10.1093/qmath/48.2.255},
       NOOPURL = {https://doi.org/10.1093/qmath/48.2.255},
}

@article {paucitytriples,
    AUTHOR = {Br\"udern, J. and Wooley, T. D.},
     TITLE = {A paucity problem for certain triples of diagonal equations},
   JOURNAL = {Bull. Lond. Math. Soc.},
  FJOURNAL = {Bulletin of the London Mathematical Society},
    VOLUME = {54},
      YEAR = {2022},
    NUMBER = {4},
     PAGES = {1396--1412},
      ISSN = {0024-6093,1469-2120},
   MRCLASS = {11D45 (11P05 11P55)},
  NOOPMRNUMBER = {4488314},
MRREVIEWER = {Boqing\ Xue},
       NOOPDOI = {10.1112/blms.12636},
       NOOPURL = {https://doi.org/10.1112/blms.12636},
}

@book {Debarre,
    AUTHOR = {Debarre, O.},
     TITLE = {Higher-dimensional algebraic geometry},
    SERIES = {Universitext},
 PUBLISHER = {Springer-Verlag, New York},
      YEAR = {2001},
     PAGES = {xiv+233},
      ISBN = {0-387-95227-6},
   MRCLASS = {14-02 (14E30 14Jxx)},
  MRNUMBER = {1841091},
MRREVIEWER = {Mark\ Gross},
       DOI = {10.1007/978-1-4757-5406-3},
       URL = {https://doi.org/10.1007/978-1-4757-5406-3},
}

@article{hareMohantyRoginskaya2007,
  title={A general energy formula},
  author={Hare, K. E. and Mohanty, P. and Roginskaya, M.},
  journal={Mathematica Scandinavica},
  pages={29--47},
  year={2007},
  publisher={JSTOR}
}

@article {HBannals,
    AUTHOR = {Heath-Brown, D. R.},
     TITLE = {The density of rational points on curves and surfaces},
   JOURNAL = {Ann. of Math. (2)},
  FJOURNAL = {Annals of Mathematics. Second Series},
    VOLUME = {155},
      YEAR = {2002},
    NUMBER = {2},
     PAGES = {553--595},
      ISSN = {0003-486X,1939-8980},
   MRCLASS = {11G35 (11G50 14G05 14G40)},
  NOOPMRNUMBER = {1906595},
MRREVIEWER = {Carlo\ Gasbarri},
       NOOPDOI = {10.2307/3062125},
       NOOPURL = {https://doi.org/10.2307/3062125},
}

@article {Heath-Brown2010,
    AUTHOR = {Heath-Brown, D. R.},
     TITLE = {Pair correlation for fractional parts of {$\alpha n^2$}},
   JOURNAL = {Math. Proc. Cambridge Philos. Soc.},
    VOLUME = {148},
      YEAR = {2010},
    NUMBER = {3},
     PAGES = {385--407},
}

@article{kerr2025,
  title={Metric {P}oissonian pair correlation for real sequences and energy estimates},
  author={Kerr, B. and Wang, H.},
  journal={Preprint, arXiv:2506.17031},
}

@article{wooleybrudernrobert,
  title={Strong paucity in the {B}r{\"u}dern--{R}obert {D}iophantine system},
  author={T. D. Wooley},
  journal={Preprint, arXiv:2601.05121},
}

@book {KupiersNiederreiter1974,
    AUTHOR = {Kuipers, L. and Niederreiter, H.},
     TITLE = {Uniform distribution of sequences},
      NOTE = {Pure and Applied Mathematics},
 PUBLISHER = {Wiley-Interscience [John Wiley \& Sons], New
              York-London-Sydney},
      YEAR = {1974},
     PAGES = {xiv+390},
}

@InProceedings{Marklof2000,
author="Marklof, J.",
title="The {B}erry-{T}abor Conjecture",
booktitle="Proceedings of the {$3^{rd}$} European Congress of Mathematics",
year="2000",
volume ="202",
publisher="Birkh{\"a}user, Basel",
address="Barcelona",
pages="421-427",
}

@article{Marklof2003,
  title={Pair correlation densities of inhomogeneous quadratic forms},
  author={Marklof, J.},
  journal={Ann. of Math.},
  volume = {158},
  number={2},
  pages={419--471},
  year={2003},
  publisher={JSTOR}
}

@article {R,
    AUTHOR = {Rudnick, Z.},
     TITLE = {What is{$\dots$} quantum chaos?},
   JOURNAL = {Notices Amer. Math. Soc.},
  FJOURNAL = {Notices of the American Mathematical Society},
    VOLUME = {55},
      YEAR = {2008},
    NUMBER = {1},
     PAGES = {32--34},
}

@article {RudnickKurlberg1999,
    AUTHOR = {Kurlberg, P. and Rudnick, Z.},
     TITLE = {The distribution of spacings between quadratic residues},
   JOURNAL = {Duke Math. J.},
  FJOURNAL = {Duke Mathematical Journal},
    VOLUME = {100},
      YEAR = {1999},
}

@article {RS,
    AUTHOR = {Rudnick, Z. and Sarnak, P.},
     TITLE = {The pair correlation function of fractional parts of
              polynomials},
   JOURNAL = {Comm. Math. Phys.},
    VOLUME = {194},
      YEAR = {1998},
    NUMBER = {1},
     PAGES = {61--70},
}

@article {RSZ,
    AUTHOR = {Rudnick, Z. and Sarnak, P. and Zaharescu, A.},
     TITLE = {The distribution of spacings between the fractional parts of
              {$n^2\alpha$}},
  JOURNAL = {Inventiones Mathematicae},
    VOLUME = {145},
      YEAR = {2001},
    NUMBER = {1},
     PAGES = {37--57},
}

@Article{RudnickTechnau2021,
author="Rudnick, Z. and Technau, N.",
title="The metric theory of the pair correlation function for small non-integer powers
",
year = "2022",
journal="J. Lond. Math. Soc",
volume = {106},
number = {3},
pages = {2752-2772},
}

@article{TechnauYesha2020,
  title={On the correlations of {$\alpha n^\theta$} mod 1},
  author={Technau, N. and Yesha, N.},
  journal={J. Eur. Math. Soc. (JEMS)},
  volume = {25}, 
  number = {10},
  pages  = {4123–4154},
  year   = {2020},
}

@article {TechnauWalker2020,
    AUTHOR = {Technau, N. and Walker, A.},
     TITLE = {On the triple correlations of fractional parts of {$n^2\alpha
              $}},
   JOURNAL = {Canad. J. Math.},
  FJOURNAL = {Canadian Journal of Mathematics. Journal Canadien de
              Math\'ematiques},
    VOLUME = {74},
      YEAR = {2022},
    NUMBER = {5},
     PAGES = {1347--1384},
      ISSN = {0008-414X,1496-4279},
   MRCLASS = {11K06 (11K60 11L05)},
  NOOPMRNUMBER = {4504666},
MRREVIEWER = {Stefan\ Steinerberger},
       NOOPDOI = {10.4153/S0008414X21000249},
       NOOPURL = {https://doi.org/10.4153/S0008414X21000249},
}

@article{vanderkam2000,
  title={Correlations of Eigenvalues on 
  Multi-Dimensional Flat Tori},
  author={Vanderkam, J. M.},
  journal={Communications in Mathematical Physics},
  volume={210},
  number={1},
  pages={203--223},
  year={2000},
  publisher={Springer}
}

@article{verzobio2025,
  title={Counting rational points on smooth hypersurfaces with high degree},
  author={Verzobio, M.},
  journal={International Mathematics Research Notices},
  volume={2025},
  number={16},
  pages={rnaf249},
  year={2025},
  publisher={Oxford University Press}
}

    \hrulefill

    \vspace{4mm}
     \noindent Department of Mathematics, University of Houston, 3551 Cullen Blvd, 77204, Houston, Texas, United States.\\
     \emph{E-mail: \textbf{clutsko@uh.edu}}\\
     \\
     \noindent Institute of Analysis and Number Theory,
    TU Graz,
    Steyrergasse 30/II,
    8010 Graz, 
    Austria.\\
    \emph{E-mail:
    \textbf{rome@tugraz.at}}\\
    \\
    \noindent Department of Mathematics,
    480 Lincoln Drive,
    Madison, WI 53706, 
    USA.\\
    \emph{E-mail: 
    \textbf{technau@wisc.edu}}
    
\end{document}